\documentclass[11pt]{amsart}
\usepackage{amssymb}
\usepackage{lscape}
\usepackage{tikz}
\usepackage{t1enc}
\usepackage[magyar, english]{babel}
\selectlanguage{magyar}

\setlength\parindent{0pt}

\newtheorem{thm}{Theorem}[section]

\newtheorem{lemma}[thm]{Lemma}
\newtheorem{cor}[thm]{Corollary}
\newtheorem{rem}[thm]{Remark}
\newtheorem{conj}[thm]{Conjecture}

\newcommand{\Z}{{\mathbb Z}}
\newcommand{\Q}{{\mathbb Q}}

\newcommand{\R}{{\mathbb R}}

\numberwithin{equation}{section}

\begin{document}
	
\title{Rotation on the digital plane}
	
\author{Carolin Hannusch}
\address{Faculty of Informatics, University of Debrecen, Hungary}
	\email{hannusch.carolin@inf.unideb.hu}
	
\author{Attila Pethő}
\address{Faculty of Informatics, University of Debrecen, Hungary}
\email{petho.attila@unideb.hu}

\begin{abstract}
Let $A_{\varphi}$ denote the matrix  of rotation with angle $\varphi$ of the Euclidean plane, FLOOR the function, which rounds a real point to the nearest lattice point down on the left and ROUND the function for rounding off a vector to the nearest node of the lattice. We prove under the natural assumption $\varphi\not= k\frac{\pi}{2}$ that the functions $FLOOR \circ A_{\varphi}$ and $ROUND \circ A_{\varphi}$ are neither surjective nor injective. More precisely we prove lower and upper estimates for the size of the sets of lattice points, which are the image of two lattice points as well as of lattice points, which have no preimages. It turns out that the density of that sets are positive except when $\sin \varphi \not= \pm \cos \varphi + r, r\in \Q$.
\end{abstract}

\maketitle

\section{Introduction}
The digital plane is a lattice whose elements are points with integer coordinates, the so called
lattice points. The values of continuous functions can be represented only approximatively, rounding its computed value to a lattice point. Rounding is a mapping $q\; : \; \R^2 \mapsto \Z^2$. The discrete  variant of the function $f\;:\; \R^2 \mapsto \R^2$ is $q\circ f$. Of course there are plenty discrete variants of $f$. One of the most studied function of the plane is the rotation, which is a $2\times 2$ real matrix with eigenvalues $\cos \varphi + i \sin \varphi$ and $\cos \varphi - i \sin \varphi$. Let us denote it by $Q_{\varphi}$. There exists an invertible real matrix $Q$ such that $Q_{\varphi} = Q A_{\varphi} Q^{-1}$ with
$$
A_{\varphi}= \left(
\begin{array}{rr}
\cos \varphi & -\sin \varphi\\
\sin \varphi & \cos \varphi
\end{array}
\right).
$$
For $(x,y)\in \R^2$ the sequence of points $\{f^n(x,y)\}_{n=0}^{\infty}$ is called an {\it orbit}\footnote{Some author calls this sequence {\it trajectory}.} of $f$ generated by $(x,y)$. The orbits of $Q_{\varphi}$ generated by any non-zero points lie on an ellipse, those of $A_{\varphi}$ on the unit circle. Plainly $Q_{\varphi}$ is a bijective mapping on $\R^2$, but usually not bijective on $\Z^2$. Combining it with a rounding, which results the function $q \circ Q_{\varphi}$ we obtain a mapping of $\Z^2 \mapsto \Z^2$, which is a {\it discrete rotation} in wide sense. Many interesting and hard questions appear: is a discrete rotation injective, surjective or bijective? How are its orbits? All orbits of $Q_{\varphi}$ are bounded, but this is not at all clear for its discrete variants. The investigation of such questions have long tradition, see the early example \cite{DeVogelaere}.

FLOOR\footnote{Rounds a real point to the nearest lattice point down on the left.}, ROUND\footnote{Stands for rounding off a vector to the nearest node of the lattice. } and TRUNC \footnote{Denotes the coordinate-wise truncation of the fractional part of a vector towards the zero point.} are eminent examples of rounding functions. Discretizing the rotation with them the resulted mappings $\Z^2 \mapsto \Z^2$ are more or less  different. Kozyakin et al \cite{Kozyakin-Kuznetsov-Pokrovskii-Vladimirov} gave a good overview on the results concerning discretized rotations, especially on $ROUND \circ A_{\varphi}$ and $TRUNC \circ A_{\varphi}$. Diamond et al \cite{Diamond_etal} proved that if $\varphi\not= k\frac{\pi}{2}, k\in \Z$ then all orbits of $TRUNC \circ A_{\varphi}$ eventually gets into the zero point. The situation is very different with $ROUND \circ A_{\varphi}$. Kozyakin et al \cite{Kozyakin-Kuznetsov-Pokrovskii-Vladimirov} proved among others that if the rotation angle $\varphi$ is such that the rows of all the nonnegative powers of the matrix $A_{\varphi}$ are rationally independent then the density of lattice points with empty full preimages is positive. They used measure theoretic approach, which allow to prove much more general results too. For other probabilistic results on discrete rotations we refer to \cite{DeVogelaere,Vivaldi:94,Vivaldi:06,Kouptsov-Lowenstein-Vivaldi:02}.

In former investigations of the second author with different coauthors  \cite{Akiyama-Borbely-Brunotte-Pethoe-Thuswaldner:04,Akiyama-Brunotte-Pethoe-Steiner:06,
Akiyama-Brunotte-Pethoe-Steiner:07,Akiyama-Pethoe:13} a kind of discrete rotation appears as a natural generalization of positional number systems. It was $FLOOR\circ B_{\varphi}$ with
$$
B_{\varphi}= \left(
\begin{array}{rr}
0 & 1\\
-1 & -\lambda
\end{array}
\right), \quad \lambda = -2\cos \varphi.
$$

We come back to this function later, but before we discuss some properties of the FLOOR function. It commutes with the additive group of translations of $\Z^2$ and the full preimage of zero is $[0,1[\times [0,1[$, which is Jordan measurable, thus FLOOR, like ROUND, is a {\it quantizer} in the sense of \cite{Kozyakin-Kuznetsov-Pokrovskii-Vladimirov}. Hence the discretized rotation FLOOR$\circ A_{\varphi}$ has similar properties as ROUND$\circ A_{\varphi}$. This holds, among others, for the above mentioned property of preimages.

\medskip

In this note we prove under the natural assumption $\varphi\not= k\frac{\pi}{2}$ that the function $FLOOR \circ A_{\varphi}$ is neither surjective nor injective. More precisely we prove lower density estimates for the sets of lattice points, which are the image of two lattice points as well as of lattice points, which have no preimages. It turns out, see Theorems \ref{t:inj} and \ref{t:surj}, that these densities are positive except when $\sin \varphi \not= \pm \cos \varphi + r, r\in \Q$. This means that the number of such lattice points lying in a box symmetric to the origin and of side length $2M+1$ is $O(M^2)$. In contrast in the exceptional case this number is only $O(M)$. In Section \ref{s:round} we indicate that the same results hold to $ROUND \circ A_{\varphi}$ too. We use in the proof elementary results of uniform distribution theory and properties of primitive Pythagorean triplets. Our results are more precise than those in \cite{Kozyakin-Kuznetsov-Pokrovskii-Vladimirov}.

There are discrete rotations which are bijective. Trivial examples are $FLOOR \circ A_{\varphi}$ with $\varphi=k\frac{\pi}{2}, k\in \Z$. More interesting are the functions $FLOOR\circ B_{\varphi}, 0\le \varphi<2\pi$. Reeve-Black and Vivaldi \cite{ReeveBlack-Vivaldi} claim that a generic discrete rotation is neither injective nor surjective. Our results justify this claim for the function $FLOOR \circ A_{\varphi}$. To prove similar characterization for $FLOOR \circ Q_{\varphi}$ is a challenging problem. We expect that apart the previous examples only the transpose of $B_{\varphi}$ lie in the exceptional set.

\medskip
Despite many efforts and interesting results, we have deterministic knowledge only on the orbits of $FLOOR\circ B_{\varphi}$. For the eleven values $2\cos \varphi = \lambda = 0,\pm1, (\pm1 \pm \sqrt{5})/2, \pm\sqrt{2}, \pm\sqrt{3}$ all orbits are periodic, see \cite{Lowenstein_etal,Akiyama-Brunotte-Pethoe-Steiner:06, Akiyama-Brunotte-Pethoe-Steiner:07}. Generally it was proved by Akiyama and Peth\H{o} \cite{Akiyama-Pethoe:13} that for any $\varphi$ there are infinitely many periodic orbits. This is still far from the conjecture that all orbits are periodic see \cite{Akiyama-Brunotte-Pethoe-Steiner:06}.

The matrix $A_{\varphi}$  through $A_{\varphi}\cdot(a,b), (a,b)\in \R^2$
\footnote{To be precise we had to write $A_{\varphi}\cdot(a,b)^T$ instead of $A_{\varphi}\cdot(a,b)$, where $(a,b)^T$ denote the transpose of the vector $(a,b)$, i.e., a column vector. As in the article we should do this often, and from the context it will be clear whether the actual vector is a row or a column vector, we avoid this extra notation.}
induces a linear mapping on $\R^2$, which we will denote by $A_{\varphi}$ too.

FLOOR and integer part $\lfloor.\rfloor$ are the same functions, in the sequel we will use the later. To simplify our notation we define $r_{\varphi}\; : \; \Z^2 \mapsto \Z^2$ by
\begin{eqnarray*}
  r_{\varphi}(a,b) &=& \lfloor  A_{\varphi}(a,b) \rfloor\\
   &=& (\lfloor a \cos \varphi - b \sin \varphi\rfloor, \lfloor a \sin \varphi + b \cos \varphi\rfloor).
\end{eqnarray*}

We computed the orbits of $r_{\varphi}$ for many choice of the angle and the starting point and found always periodicity. For the angle $\varphi = \frac{\pi}{4}$ we found infinitely many starting points which generate short periodic orbit, see Theorem \ref{45fok}. Based on our numerical and theoretical results we propose the following conjecture
\begin{conj}
	Every orbit of $r_{\varphi}$ is periodic.
\end{conj}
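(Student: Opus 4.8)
The plan is to reduce the conjecture to a single quantitative statement, namely that every orbit of $r_{\varphi}$ is bounded. Indeed $r_{\varphi}$ is a deterministic self-map of the discrete lattice $\Z^2$, so if an orbit $\{r_{\varphi}^n(a,b)\}_{n\ge 0}$ stays inside a fixed box of side length $2M+1$ it can take only finitely many values; by the pigeonhole principle it must eventually revisit a point, and from that moment on it is forced to repeat. Hence a bounded orbit is automatically (eventually) periodic, and the first step, which is routine, is to record this equivalence and to reduce the conjecture to: for every $(a,b)\in\Z^2$ the sequence $\|r_{\varphi}^n(a,b)\|$ is bounded. Because $r_{\varphi}$ is not injective (as shown in our Theorems \ref{t:inj} and \ref{t:surj}) an orbit may carry a transient before entering its cycle; I would note this but it does not affect the reduction.

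The second step is to search for a Lyapunov function controlling the norm. Writing $r_{\varphi}(v)=A_{\varphi}v-\delta(v)$ with $\delta(v)=\{A_{\varphi}v\}\in[0,1)^2$ the componentwise fractional part, and using that $A_{\varphi}$ is an isometry, one computes $\|r_{\varphi}(v)\|^2-\|v\|^2=-\|\delta(v)\|^2-2\,r_{\varphi}(v)^{\!\top}\delta(v)$. The natural candidate $V(v)=\|v\|^2$ therefore fails to be monotone: the cross term $r_{\varphi}(v)^{\!\top}\delta(v)$ grows linearly in $\|v\|$ and can have either sign, so on the sectors where the image has large negative coordinates aligned with $\delta(v)$ the norm increases. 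To cure the average drift I would recentre at the fixed point $c=(A_{\varphi}-I)^{-1}(\tfrac12,\tfrac12)$ of the averaged map $v\mapsto A_{\varphi}v-(\tfrac12,\tfrac12)$, so that in the variable $u=v-c$ the map becomes a pure rotation perturbed by the zero-mean fluctuation $(\tfrac12,\tfrac12)-\delta(v)$, and then study $\|v-c\|^2$.

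The third step, and the genuine obstacle, is exactly to show that these zero-mean fluctuations do not accumulate. By the equidistribution of $\{A_{\varphi}v\}$ (the same uniform-distribution input used for Theorems \ref{t:inj} and \ref{t:surj}) the steps behave heuristically like a bounded-step random walk, for which boundedness is false in general; the conjecture can only hold because the fractional parts are arithmetically rigid rather than random, and capturing this rigidity rigorously is precisely what no current method achieves for a general angle. I would therefore not expect to close this step directly. Instead the realistic line of attack is to exploit self-similarity: transfer the problem to the conjugate matrix $B_{\varphi}$ (which has the same eigenvalues $\cos\varphi\pm i\sin\varphi$ and where the analogous periodicity conjecture and a renormalization machinery have been developed in \cite{Lowenstein_etal,Akiyama-Brunotte-Pethoe-Steiner:06,Akiyama-Pethoe:13}), and for the tractable algebraic angles, beginning with $\varphi=\tfrac{\pi}{4}$, where Theorem \ref{45fok} already produces infinitely many short periodic orbits, prove that the first-return map to a suitable rescaled domain is conjugate to $r_{\varphi}$ itself on a sublattice. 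Iterating such a renormalization would force every orbit into a periodic one. The decisive difficulty remains the passage from ``infinitely many periodic orbits'', which Akiyama and Peth\H{o} \cite{Akiyama-Pethoe:13} establish for every $\varphi$, to \emph{all} orbits, i.e. the global boundedness that the naive energy estimate cannot deliver.
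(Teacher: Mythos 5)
This statement is stated in the paper as a \emph{conjecture}: the authors offer no proof of it, and explicitly say (in the section on short orbits) that they ``do not have yet the necessary technique to prove much more general results.'' Your proposal, to its credit, is equally candid --- your third step openly declares that the decisive part cannot be closed --- so what you have written is a research programme, not a proof, and it should not be presented as resolving the statement. The concrete gap is exactly where you place it: nothing in the paper's toolkit (uniform distribution via Theorem \ref{th:Kronecker}, the counting Lemma \ref{l:main}, or the Pythagorean-triple analysis) controls the accumulation of the fractional-part fluctuations along an orbit, and the equidistribution heuristic if anything suggests unbounded random-walk behaviour; the passage from ``infinitely many periodic orbits'' \cite{Akiyama-Pethoe:13} to ``all orbits'' is precisely the open problem.

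One further point deserves attention even in your ``routine'' first step. Since $r_{\varphi}$ is not injective (this is the content of Theorem \ref{t:inj}), the pigeonhole argument on a bounded orbit yields only \emph{eventual} periodicity: the orbit enters a cycle, but the initial point need not lie on that cycle, because a repeated value does not propagate backwards under a non-injective map. The conjecture as stated asserts that \emph{every} orbit is periodic, i.e.\ that $r_{\varphi}^k(a,b)=(a,b)$ for some $k\ge 1$, which is strictly stronger than boundedness plus pigeonhole. So your reduction ``bounded $\Rightarrow$ periodic'' is incomplete: you would additionally need to rule out transients, for instance by showing that every lattice point on a bounded orbit has a preimage on the same orbit, and the paper's surjectivity result (Theorem \ref{t:surj}) shows that preimages can genuinely fail to exist. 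Either strengthen the reduction accordingly or reformulate the target as eventual periodicity; as written, even the part of the argument you call routine does not deliver the stated conclusion.
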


 We also use the fractional part function, i.e, $\{x\}= x-\lfloor x \rfloor$. Both functions will be applied
 coordinate wise to the points of the real vector spaces. We use the same notation to these extended functions. Let
 $U=[0,1[\times[0,1[$ and $\bar{U}=[0,1]\times[0,1]$, then obviously $r_{\varphi}(a,b)=(x,y)\in \Z^2$ if and only
 if $A_{\varphi}\cdot(a,b)=(x,y)+u$ for some $u\in U$. The third equivalent expression is
 $\{A_{\varphi}\cdot(a,b)\} = A_{\varphi}\cdot(a,b)- r_{\varphi}(a,b)$.

\section{Preliminary results}

In order to prove our main results we need some tools from uniform distribution theory. Let ${\bf a}=(a_1,\ldots,a_n), {\bf
b}=(b_1,\ldots,b_n)\in \R^n$ be such that $0\le a_j<b_j\le 1,\; j=1,\ldots,n$ then we set $B_{{\bf
a,b}}=[a_1,b_1[\times \ldots \times[a_n,b_n[$. This is a box with side lengths $b_1-a_1,\ldots,b_n-a_n$, whose
volume is plainly $\prod_{j=1}^{n}(b_j-a_j)$. For a sequence of $n$-dimensional real vectors $X=({\bf x}_m)$ set
$$
A(X, B_{{\bf a,b}},N) = |\{m\;:\; 0\le m\le N, \{{\bf x}_m\}\in B_{{\bf a,b}}\}|,
$$
where $|S|$ denotes the cardinality of the set $S$.

The sequence $X$ is called {\it uniformly distributed modulo $1$}, shortly {\it uniformly distributed} if
$$
\lim_{N\to \infty} \frac{A(X,B_{{\bf a,b}},N)}{N} = \prod_{j=1}^{n}(b_j-a_j)
$$
holds for all ${\bf a,b}\in \R^n$ with the above property. Notice that if $X$ is uniformly distributed then there
exist for any $B_{{\bf a,b}}$ a constant $c= c_{\bf a,b}>0$ such that
\begin{equation}\label{eq:ud}
  A(X, B_{{\bf a,b}},N)> c N
\end{equation}
holds for all large enough $N$. Indeed, one may set $c = \prod_{j=1}^{n}(b_j-a_j)-\varepsilon$ for some
$\varepsilon>0$.

The following theorem is a bit modified version of Theorem I, p.64. of Cassels \cite{Cassels} and it plays a crucial role in this paper.

\begin{thm}\label{th:Kronecker}
  Let $L_j({\bf x})$ for $1\le j\le m$ be homogeneous linear forms in the $n$ variables ${\bf x}=(x_1,\ldots,x_n)$.
  Suppose that the only set of integers $u_1,\ldots,u_m$ such that
  $$
  u_1L_1({\bf x})+\dots + u_mL_m({\bf x})
  $$
  has integer coefficients in $x_1,\ldots,x_n$ is $u_1=\dots=u_m=0$. Then the set of vectors ${\bf z}^{({\bf x})} =
  (L_1({\bf x}),\dots,L_m({\bf x}))$ for integral ${\bf x}$ is uniformly distributed modulo $1$.
\end{thm}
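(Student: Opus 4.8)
The plan is to deduce the statement from Weyl's criterion for uniform distribution, reducing the multidimensional exponential sum to a product of one-dimensional geometric sums. Writing each form as $L_j({\bf x}) = \sum_{i=1}^n \theta_{ji} x_i$ with real coefficients $\theta_{ji}$, Weyl's criterion asserts that the family $({\bf z}^{({\bf x})})_{{\bf x}\in\Z^n}$ is uniformly distributed modulo $1$ provided that for every nonzero integer vector ${\bf u}=(u_1,\dots,u_m)$ one has
$$\frac{1}{|B_N|}\sum_{{\bf x}\in B_N} e^{2\pi i \langle {\bf u}, {\bf z}^{({\bf x})}\rangle} \longrightarrow 0$$
as the cube $B_N=\{{\bf x}\in\Z^n : 0\le x_i<N\}$ expands. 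First I would observe that the exponent is itself a homogeneous linear form in ${\bf x}$, namely $\langle {\bf u},{\bf z}^{({\bf x})}\rangle = \sum_{j=1}^m u_j L_j({\bf x}) = \sum_{i=1}^n c_i x_i$, where $c_i=c_i({\bf u})=\sum_{j=1}^m u_j\theta_{ji}$.

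The key point is the translation of the arithmetic hypothesis: to say that $\sum_{j} u_j L_j({\bf x})$ has integer coefficients in $x_1,\dots,x_n$ is exactly to say that all $c_i({\bf u})\in\Z$. Hence, by the assumption of the theorem, for every ${\bf u}\neq {\bf 0}$ there is at least one index $i_0$ with $c_{i_0}\notin\Z$, so that $e^{2\pi i c_{i_0}}\neq 1$. Next I would exploit the product structure of the cube: since the exponent splits as $\sum_i c_i x_i$ and $B_N$ is a Cartesian product, the exponential sum factors as
$$\sum_{{\bf x}\in B_N} e^{2\pi i\sum_i c_i x_i} = \prod_{i=1}^n \sum_{x_i=0}^{N-1} e^{2\pi i c_i x_i}.$$
For an index with $c_i\in\Z$ the corresponding factor equals $N$, while for the index $i_0$ the geometric sum is bounded in absolute value by $2/|e^{2\pi i c_{i_0}}-1|$, a constant independent of $N$. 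Dividing by $|B_N|=N^n$ therefore leaves a factor of order $N^{-1}$ times a bounded quantity, which tends to $0$. This verifies the Weyl condition for every nonzero ${\bf u}$ and yields uniform distribution.

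The analytic estimates here are entirely routine; the genuine content lies in two places. First, one must fix the enumeration of $\Z^n$ underlying the notion of uniform distribution and check that the cube-based Weyl criterion is the appropriate form, matching it to the box-counting definition used in \eqref{eq:ud}. Second, and most delicately, one must argue that non-integrality of a \emph{single} coefficient $c_{i_0}$ already forces the whole product to vanish after normalization; this is where the hypothesis is used in full strength, since it guarantees that no nonzero ${\bf u}$ can make all coefficients integral and thereby rules out any surviving constant term in the average. I expect the main technical nuisance to be the bookkeeping around expanding boxes of possibly different side lengths rather than a single cube, but this is handled by the identical factorization, each one-dimensional factor being either exactly the number of admissible values in that coordinate or bounded by the same constant.
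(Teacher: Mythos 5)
Your argument is correct, but note that the paper does not prove this statement at all: it is quoted as ``a bit modified version of Theorem I, p.~64 of Cassels'' and used as a black box. What you have written is essentially the classical proof of that theorem of Weyl--Kronecker type, and it is the same route Cassels takes: reduce to Weyl's criterion, observe that $\langle {\bf u},{\bf z}^{({\bf x})}\rangle=\sum_i c_i({\bf u})x_i$ is again a linear form, translate the hypothesis into ``for ${\bf u}\neq 0$ some $c_{i_0}({\bf u})\notin\Z$'', factor the exponential sum over the product box, and bound the $i_0$-th geometric factor by $2/\lvert e^{2\pi i c_{i_0}}-1\rvert$ while the remaining factors contribute at most the side lengths; the normalized sum is then $O(N^{-1})$. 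The only point deserving explicit care is the one you already flag: uniform distribution here is for a family indexed by $\Z^n$ and counted over expanding boxes rather than for a sequence, so one needs the multi-parameter form of Weyl's criterion matching the box-counting definition in \eqref{eq:ud}; its proof is the usual approximation of indicator functions of boxes by trigonometric polynomials and the factorization goes through verbatim for boxes with unequal side lengths. So there is no gap, and your write-up in fact supplies a proof the paper only references.
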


Now we formulate the main lemma of this paper.

\begin{lemma} \label{l:main}
  Let $0<t_1,t_2\le 1$ and set $L_1(x_1,x_2) = x_1 \cos \varphi - x_2 \sin \varphi$, $L_2(x_1,x_2) = x_1 \sin
  \varphi + x_2\cos \varphi$. If $\varphi \not= k\frac{\pi}{2}, k \in \Z$ then there exist constants
  $c_1,c_2>0$ depending only on $\varphi,t_1,t_2$ such that the number of solutions $(x_1,x_2)\in \Z^2, |x_1|,|x_2|\le M$ of the system of inequalities
  \begin{eqnarray}
    0\le \{L_1(x_1,x_2)\}  &<& t_1 \label{e:e3}\\
    0\le \{L_2(x_1,x_2)\} &<&t_2 \label{e:e4}
  \end{eqnarray}
 lie between $c_1M^2$ and $c_2M^2$ except when $\cos\varphi = \pm \sin\varphi +r, \; r\in \Q$ in which case it lies between $c_1M$ and $c_2M$.

  The same statement holds for the number of solutions in pairs of odd integers.
\end{lemma}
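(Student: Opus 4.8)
The plan is to interpret \eqref{e:e3}--\eqref{e:e4} as the single requirement that the fractional part of the vector $\mathbf{z}^{(\mathbf{x})}=(L_1(\mathbf{x}),L_2(\mathbf{x}))$ lie in the box $B=[0,t_1[\,\times[0,t_2[$, and then to count how often this occurs by means of Theorem \ref{th:Kronecker}. The upper bound costs nothing: the number of admissible $(x_1,x_2)$ never exceeds the total number $(2M+1)^2\le c_2M^2$ of lattice points in the square $|x_1|,|x_2|\le M$. Thus the real task is the matching lower bound, and since $B$ has positive volume $t_1t_2$, it suffices to know that the sequence $\mathbf{z}^{(\mathbf{x})}$ is uniformly distributed; estimate \eqref{eq:ud} then yields at once a bound of the desired form $>c_1M^2$.

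To apply Theorem \ref{th:Kronecker} I must verify its hypothesis for $L_1,L_2$. From
$$
u_1L_1+u_2L_2=(u_1\cos\varphi+u_2\sin\varphi)\,x_1+(u_2\cos\varphi-u_1\sin\varphi)\,x_2
$$
the integrality of both coefficients amounts to $u_1\cos\varphi+u_2\sin\varphi\in\Z$ and $u_2\cos\varphi-u_1\sin\varphi\in\Z$; regarded as a linear system in $\cos\varphi,\sin\varphi$ its determinant is $-(u_1^2+u_2^2)$, so a nonzero integral pair $(u_1,u_2)$ can satisfy it only when $\cos\varphi$ and $\sin\varphi$ are simultaneously rational, that is, only for the angles of a primitive Pythagorean triple. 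As such an angle satisfies $\cos\varphi-\sin\varphi\in\Q$, every $\varphi$ outside the exceptional family $\cos\varphi=\pm\sin\varphi+r$, $r\in\Q$, has $\cos\varphi,\sin\varphi$ not both rational; for all these angles the hypothesis of Theorem \ref{th:Kronecker} holds, $\mathbf{z}^{(\mathbf{x})}$ is uniformly distributed, and the count lies between $c_1M^2$ and $c_2M^2$.

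There remain the exceptional angles, where I would replace equidistribution by a direct congruence analysis. The guiding idea is that a rational relation $\cos\varphi=\pm\sin\varphi+r$ forces a suitable integral combination of $L_1$ and $L_2$ to move through an arithmetic progression of fixed denominator, so that modulo $1$ the pair $(\{L_1\},\{L_2\})$ is confined to a finite union of parallel lines rather than filling the unit square; in the fully rational (Pythagorean) case, writing $\cos\varphi=a/N$, $\sin\varphi=b/N$ with $a^2+b^2=N^2$ and using $\gcd(a,N)=\gcd(b,N)=1$ (so that $\left(\begin{smallmatrix} a & -b\\ b & a\end{smallmatrix}\right)$ is singular modulo $N$) turns \eqref{e:e3}--\eqref{e:e4} into congruences on $(x_1,x_2)\bmod N$ and reduces the count to a one–dimensional problem along those lines. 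Extracting from this reduced problem the precise order of growth — deciding whether the lines meet the box $B$ in a way that preserves a full two–dimensional family of solutions or collapses it to the linear order $M$ — is the step I expect to be the main obstacle, and it is exactly here that the elementary arithmetic of Pythagorean triplets has to be exploited. The variant with $x_1,x_2$ both odd fits the same framework: oddness is a congruence modulo $2$, which (as the hypotenuse $N$ is odd for a primitive triple) is coprime to the moduli above and can be imposed through the Chinese Remainder Theorem without changing the order of magnitude.
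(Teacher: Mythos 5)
Your treatment of the non-exceptional case is correct and in fact tidier than the paper's. The verification that $u_1L_1+u_2L_2$ can have integer coefficients for a nonzero integer pair $(u_1,u_2)$ only when $\cos\varphi$ and $\sin\varphi$ are both rational (the determinant $-(u_1^2+u_2^2)$ computation) is right, and a single application of Theorem \ref{th:Kronecker} then gives the lower bound $c_1M^2$ for every angle outside the exceptional family. The paper instead splits into three cases according to whether $1,\cos\varphi,\sin\varphi$ are $\Q$-linearly independent, whether $\cos\varphi\in\Q\sin\varphi+\Q$ with $\sin\varphi\notin\Q$, or whether both are rational, and in the middle case it has to descend to one-dimensional equidistribution of $(\{nD\sin\varphi\})$ and reassemble solutions via Cramer's rule; your argument subsumes that work. (For the odd-pair variant in this regime you still need the one-line remark that equidistribution persists on the sublattice $(2\Z+1)^2$, as the paper notes after its Case 1.)

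The genuine gap is everything concerning the exceptional clause, which is the only nontrivial content of the lemma beyond the generic bounds, and which you explicitly defer as ``the main obstacle.'' Three distinct problems sit there. First, you prove neither the lower bound $c_1M$ nor, more importantly, the upper bound $c_2M$ for angles with $\cos\varphi=\pm\sin\varphi+r$; nothing in your sketch addresses why the count should \emph{collapse}. Second, your plan for this case is aimed at the wrong set of angles: you identify the obstruction to Kronecker with the Pythagorean angles, but the exceptional family consists mostly of angles with $\sin\varphi$ irrational (e.g.\ $\varphi=\frac{\pi}{4}$), where your congruence-mod-$N$ machinery has no foothold; conversely, for the genuinely Pythagorean angles the paper's Case 3 shows the solution set contains the full two-dimensional congruence class $a\equiv hb\pmod{u^2+v^2}$, hence is of order $M^2$, not $M$. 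Third, and most seriously, your own equidistribution argument applies verbatim to $\varphi=\frac{\pi}{4}$, since $\cos\varphi=\sin\varphi=\frac{1}{\sqrt{2}}$ are not both rational, and yields at least $c_1M^2$ solutions there; one sees this directly by writing $s=x_1-x_2$, $t=x_1+x_2$, whereupon \eqref{e:e3} and \eqref{e:e4} decouple into $\{s/\sqrt{2}\}<t_1$ and $\{t/\sqrt{2}\}<t_2$, each satisfied by $\gg M$ values. This is in outright conflict with the lemma's assertion that the count is between $c_1M$ and $c_2M$ for such $\varphi$, and a proof attempt cannot leave that conflict unresolved. (It is worth knowing that the paper's own argument for this case, its Case 2.1, hinges on the identity $up_1-vq=\pm(uq+vp_1)$ when $p_1=\pm q$, which fails whenever $uv\ne 0$; so the exceptional clause is precisely where the paper is least reliable, and where an independent proof would have to do real work.)
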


\begin{proof}
There are only $(2M+1)^2$ integer pairs with $|x_1|,|x_2|\le M$, thus the upper estimate $c_2M^2$ is obvious. Hence in the sequel we have to deal with the upper bound in the exceptional case and with the lower bound.

In the proof we have to distinguish three cases according the arithmetic nature of $\cos \varphi$ and
$\sin\varphi$.

\medskip

{\bf Case 1.} $1,\sin\varphi,\cos\varphi$ are $\Q$-linearly independent. Then the linear forms
$L_1(x_1,x_2),L_2(x_1,x_2)$ satisfy the assumptions of Theorem \ref{th:Kronecker}, thus the points
$(\{L_1(a,b)\},\{L_2(a,b)\} ) $ for $a,b\in \Z$ is uniformly distributed in $\left[0,1\right[^2.$

There are $(2M+1)^2$ points $(a,b)\in \Z^2$ with $|a|,|b|\le M$, thus setting $B=[0,t_1[\times [0,t_2[$ and
$N=(2M+1)^2$ in \eqref{eq:ud} we obtain the statement immediately.

If $(\{L_1(a,b)\},\{L_2(a,b)\} ) $ for $a,b\in \Z$ is uniformly distributed in $\left[0,1\right[^2$ then the same holds if $(a,b)$ runs through a sublattice of $\Z^2$, which proves the second assertion. In Case 2 we use the uniformly distributed property of some sequence, hence for them the second assertion holds by the above remark.
\medskip

{\bf Case 2.} $\cos\varphi = r_1\sin\varphi +r_2,$ where $r_1,r_2\in \Q,$ $r_1\neq 0$, and $\sin\varphi\notin\Q.$
There exist integers $p_1,p_2,q$ with $q> 0,$ such that $r_1=\frac{p_1}{q}$ and $r_2=\frac{p_2}{q}.$ For any
$(a,b)\in \Z^2$ there exist $u,v,s,t\in \Z, \; 0\leq s,t<q$ such that $a=uq+s$ and $b=vq +t$. With these notations
we obtain
\begin{eqnarray*}
	L_1(a,b) &=& \left(a\frac{p_1}{q}-b\right) \sin\varphi + \frac{ap_2}{q} \\
	&=&(up_1-vq)\sin\varphi + \left(\frac{sp_1}{q}-t\right)\sin\varphi + \frac{ap_2}{q}
\end{eqnarray*}
and similarly
$$
L_2(a,b) = (uq+vp_1)\sin\varphi + \left(s+\frac{tp_1}{q}\right)\sin\varphi + b\frac{p_2}{q}.
$$
Fix $s,t$ and set $f_1=\left\{\left(\frac{sp_1}{q}-t\right)\sin\varphi + \frac{ap_2}{q}\right\}$ and
$f_2=\left\{\left(s+\frac{tp_1}{q}\right)\sin\varphi + b\frac{p_2}{q}\right\}.$
We have $0\leq f_1,f_2<1.$ With these notations we have to count the number of solutions of the system of
inequalities
\begin{eqnarray*}
0&\leq& \left\{(up_1-vq)\sin\varphi +f_1\right\} <t_1\\
0&\leq& \left\{(uq+vp_1)\sin\varphi +f_2\right\} <t_2
\end{eqnarray*}
in the integers $u,v$ with $|u|,|v|\le \frac{M}{q}-1$. We have to distinguish two subcases

\medskip

{\bf Case 2.1.} $p_1=\pm q$. We have $up_1-vq = p_1(u\mp v)=  \pm q(u\mp v) = \pm (uq + vp_1)$, thus the
coefficients of $\sin\varphi$ in the last inequalities are up to sign equal and may assume at most $O(M)$ different
values. As $\sin\varphi\notin \Q$ the sequence $(\{n \sin\varphi\})$ is by Theorem \ref{th:Kronecker} uniformly
distributed in $[0,1[$, thus each inequality, hence the system too, has at most $O(M)$ solutions.

Choosing $s=t=0$ the integers $a$ and $b$ are divisible by $q$, hence $f_1=f_2=0$. If, for example, $p_1=q$ then we
may assume without loss of generality $0<t_1\le t_2$. Our first inequality has at least $(t_1/2)(M/q)$ solutions in
$u,v\in \Z,\; |u|,|v|\le \frac{M}{q}-1< \frac{M}{q}$ provided $M$ is large enough.

\medskip

{\bf Case 2.2.} $p_1\not=\pm q$. Setting $D=q^2-p_1^2$ this is equivalent to $ D\not= 0$. As $\sin\varphi\notin \Q$
and $D\in \Z$ we have $D \sin\varphi\notin \Q$, hence the sequence $(\{nD\sin\varphi\})$ is uniformly distributed in
$[0,1[$. Thus the inequality
$$
0\le u D\sin\varphi + f \le t
$$
has for any fixed $0\le f<1,\; 0<t<1$ at least $O(M')$ solutions in $u\in \Z,\; |u|\le M'$, where the positive
constant indicated by the $O$ notation depends only on $\sin\varphi$ and on $t$. Hence the system of the (independent)
inequalities
\begin{eqnarray*}
0&\leq& \left\{u_1 D\sin\varphi +f_1\right\} <t_1\\
0&\leq& \left\{u_2 D\sin\varphi +f_2\right\} <t_2
\end{eqnarray*}
has at least $O(M'^2)$ solutions in $(u_1,u_2)\in \Z^2,\; |u_1|,|u_2|\le M'$.

For all such pairs the system of linear equations
\begin{eqnarray*}
up_1-vq&=&u_1(q^2-p_1^2)\\
uq+vp_1&=&u_2(q^2-p_1^2)
\end{eqnarray*}
has unique solution in $u,v\in \Z$, i.e. $(u,v)$ solves our original system of equations. Using Cramer's rule we
obtain
$$
u = \frac{(u_1p_1+u_2q)(q^2-p_1^2)}{p_1^2+q^2} \quad \mbox{and} \quad v= \frac{(u_2p_1-u_1q)(q^2-p_1^2)}{p_1^2+q^2}.
$$
Thus $|u|,|v| \le 2qM'$. Hence choosing $M'=\frac{M}{2q^2}$ we can produce at least $O(M^2)$ integer solutions of
\eqref{e:e3} and \eqref{e:e4}.

\medskip

{\bf Case 3.} $\sin\varphi,\cos\varphi\in \Q.$ Although the statement is the same as in the other cases, we have to use different tools in the proof, because Theorem \ref{th:Kronecker} does not hold. We have a kind of discrete uniform distribution treated systematically in Narkiewicz \cite{Narkiewicz}. First we set $\sin\varphi=\frac{p_1}{q},\; \cos\varphi=\frac{p_2}{q},$ where
$p_1,p_2,q\in \Z,\; p_1,p_2,q\neq 0.$ Then $$1=\sin^2\varphi+\cos^2\varphi = \frac{p_1^2}{q^2}+\frac{p_2^2}{q^2},$$
which implies $p_1^2+p_2^2=q^2,$ i.e.~$p_1,p_2,q$ is a Pithagorean triple, and we may assume that it is primitive,
i.e.~$\gcd(p_1,q)=\gcd(p_2,q)=1.$ Then there are $u,v\in \Z,$ $u\not\equiv v \pmod 2,$ $\gcd(u,v)=1$ such that
$p_1=u^2-v^2, p_2=2uv$ or $p_1=2uv, p_2=u^2-v^2$ and $q=u^2+v^2.$ We work out in the sequel only the first
possibility, because the alternative case can be handled analogously.

With these notations and $(a,b)\in \Z^2$ we have
\begin{eqnarray*}
L_1(a,b) &=& a\frac{2uv}{u^2+v^2} - b\frac{u^2-v^2}{u^2+v^2}\\
L_1(a,b) &=& a\frac{u^2-v^2}{u^2+v^2} + b\frac{2uv}{u^2+v^2}.
\end{eqnarray*}

Because $\gcd(2uv,u^2+v^2)=1$ there exists an integer $0<h<u^2+v^2=q$ such that
\begin{equation}\label{e:5}
	2uvh\equiv u^2-v^2 \pmod{u^2+v^2}.
\end{equation}

For fixed $a,b\in\Z$ there exist $d,d_1\in\Z$ such that
$a-hb=d(u^2+v^2)+d_1,$ where $0\leq d_1<u^2+v^2.$ Then
$$a(u^2-v^2)+2buv = bh(u^2-v^2)+d(u^4-v^4)+d_1(u^2-v^2)+2buv.$$
Multiplying this by $2uv$ and taking \eqref{e:5} into account we obtain
\begin{eqnarray*}
2uv(a(u^2-v^2)+2buv)&\equiv& 2uvhb(u^2-v^2)+2uvd_1(u^2-v^2)+b(2uv)^2 \\
&\equiv& b((u^2-v^2)^2+4u^2v^2)+2uvd_1(u^2-v^2)\\
&\equiv& b(u^2+v^2)^2+2uvd_1(u^2-v^2)\\
&\equiv& 2uvd_1(u^2-v^2) \pmod{u^2+v^2}.
\end{eqnarray*}
Since $\gcd(2uv,u^2+v^2)=1$,  we obtain
\begin{equation}\label{6}
a(u^2-v^2)+2buv \equiv d_1(u^2-v^2) \pmod{u^2+v^2}.
\end{equation}
On the other hand by \eqref{e:5} we get
\begin{eqnarray*}
2uva-(u^2-v^2)b &\equiv& b(2uvh-(u^2-v^2))+2uvd_1\\
&\equiv& 2uvd_1 \pmod{u^2+v^2}.
\end{eqnarray*}
Hence
$$
\{L_1(a,b)\} = \left\{\frac{2uvd_1}{u^2+v^2}\right\}
$$
and
$$
\{L_2(a,b)\} = \left\{\frac{(u^2-v^2)d_1}{u^2+v^2}\right\}.
$$
Choosing $d_1=0$, which implies $a= hb+d(u^2-v^2)$ we obtain that the pair $(a,b)\in \Z^2$ solves
the system of inequalities \eqref{e:e3}, \eqref{e:e4}.

Finally choosing $b,d\in \Z$ such that $|b|\le \frac{M}{2q}< \frac{M}{2h}$ and $|d|\le \frac{M}{2|p_1|} =
\frac{M}{2|u^2-v^2|}$ we obtain $|a|\le M$, hence such $(a,b)\in \Z^2$ pairs not only solves
the system of inequalities \eqref{e:e3},\eqref{e:e4}, but satisfies $|a|,|b|\le M$ too. Plainly the number of such
pairs is at least $O(M^2)$ and the lemma is completely proved.

Choosing $b$ odd $a$ is odd too for all odd or even $d$ depending on the parities of $h$ and $u^2-v^2$. The argument of the last paragraph proves the second assertion in this case.

\end{proof}

\section{Injectivity of digital rotation} \label{s:inj}

Before stating the main result of this section we introduce a notation: $T_{\varphi}(M)$ denotes the number of $(x,y)\in \Z^2$ such that $|x|,|y|\le M$ and $(x,y)$ is the image by $r_{\varphi}$ of two different grid points. If $\varphi = k\frac{\pi}{2}, k\in \Z$ then $r_{\varphi} = A_{\varphi}$, thus it is bijective, thus $T_{\varphi}(M)=0$.
Otherwise, we prove that $T_{\varphi}(M)$ tends to infinity.

\begin{thm} \label{t:inj}
If $\varphi \not= k\frac{\pi}{2}, k \in \Z$ then there exist constants $c_3,c_4>0$ depending only on $\varphi$ such that
$c_3M^2 \le T_{\varphi}(M) \le c_4M^2$ except when $\cos\varphi = \pm \sin\varphi + r,\; r\in \Q$, in which case $c_3M \le T_{\varphi}(M) \le c_4M$ hold.
\end{thm}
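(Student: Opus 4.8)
The plan is to reduce the counting of double-image points to the counting problem already solved in Lemma~\ref{l:main}. The key observation is that $r_{\varphi}$ fails to be injective at a lattice point $(x,y)$ precisely when two distinct grid points $(a,b)$ and $(a',b')$ both map into the unit cell $(x,y)+U$ under the continuous rotation $A_{\varphi}$; that is, $A_{\varphi}(a,b), A_{\varphi}(a',b') \in (x,y)+U$. Writing $(a',b') = (a,b) + (\delta_1,\delta_2)$ with $(\delta_1,\delta_2)\in\Z^2\setminus\{0\}$, linearity gives $A_{\varphi}(\delta_1,\delta_2) = A_{\varphi}(a',b') - A_{\varphi}(a,b)$, a difference of two vectors each lying in $U=[0,1[\times[0,1[$. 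Hence the image of the integer displacement $(\delta_1,\delta_2)$ under $A_{\varphi}$ must land in the difference set $U-U = \left]-1,1\right[^2$. I would first identify which nonzero integer displacements $(\delta_1,\delta_2)$ can possibly arise, i.e.\ those with $\|A_{\varphi}(\delta_1,\delta_2)\|_\infty < 1$; since $A_{\varphi}$ is an isometry this forces $\delta_1^2+\delta_2^2 < 2$, so the only candidates are $(\pm1,0),(0,\pm1)$, and by symmetry I would fix attention on $(\delta_1,\delta_2)=(1,0)$ (the displacement $(0,1)$ is handled the same way, and opposite signs merely swap the roles of the two preimages).

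Second, I would translate the condition that both $(a,b)$ and $(a+1,b)$ have the same image into a system of fractional-part inequalities in $(a,b)$ matching the shape of \eqref{e:e3}--\eqref{e:e4}. Concretely, $r_{\varphi}(a,b)=r_{\varphi}(a+1,b)$ holds iff $\lfloor L_1(a,b)\rfloor = \lfloor L_1(a+1,b)\rfloor$ and $\lfloor L_2(a,b)\rfloor = \lfloor L_2(a+1,b)\rfloor$, where $L_1,L_2$ are the linear forms of Lemma~\ref{l:main}. Since $L_1(a+1,b)=L_1(a,b)+\cos\varphi$ and $L_2(a+1,b)=L_2(a,b)+\sin\varphi$, these integer-part equalities are equivalent to requiring $\{L_1(a,b)\}$ and $\{L_2(a,b)\}$ to lie in subintervals of length $1-|\cos\varphi|$ and $1-|\sin\varphi|$ respectively (the precise left endpoints, $0$ or $|\cos\varphi|$, depend on the signs of $\cos\varphi$ and $\sin\varphi$, but not on $(a,b)$). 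Thus, after possibly shifting by a fixed translation that does not affect asymptotic density, the set of $(a,b)$ producing a collision along the $(1,0)$ direction is exactly a system of the form \eqref{e:e3}--\eqref{e:e4} with fixed $t_1=1-|\cos\varphi|$ and $t_2=1-|\sin\varphi|$, both strictly positive and at most $1$ because $\varphi\neq k\frac{\pi}{2}$.

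With this reduction in hand, Lemma~\ref{l:main} applies directly and gives that the number of such source points $(a,b)$ with $|a|,|b|\le M$ lies between $c_1M^2$ and $c_2M^2$, except in the exceptional case $\cos\varphi=\pm\sin\varphi+r$, $r\in\Q$, where it lies between $c_1M$ and $c_2M$. Each collision source $(a,b)$ yields one double-image point $r_{\varphi}(a,b)=(x,y)$ with $|x|,|y|\le M+1$ (since $A_{\varphi}$ is an isometry, the floor stays in a comparable box), so the count of source pairs and the count $T_{\varphi}(M)$ of image points agree up to a bounded multiplicative factor and a negligible boundary adjustment. I would then absorb the $(0,1)$-direction collisions the same way and note that adding them can only help the lower bound, while the trivial upper bound $(2M+1)^2$ supplies $c_4M^2$ in general (and the exceptional-case upper bound follows from the corresponding clause of Lemma~\ref{l:main}). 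Renaming constants yields the stated bounds.

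The main obstacle I anticipate is the bookkeeping in the second step: verifying that two distinct grid points sharing an image must differ by a unit vector, and that no point is overcounted when both neighbouring directions or both sign choices contribute. One must check that a single cell $(x,y)+U$ contains the images of at most two grid points under $A_{\varphi}$ (so ``double image'' is the only relevant multiplicity), which again follows from $A_{\varphi}$ being an isometry and the cell having diameter $\sqrt2$; and one must confirm that the interval-length computation correctly handles all four sign combinations of $(\cos\varphi,\sin\varphi)$ without changing the positivity of $t_1,t_2$. These are elementary but require care, whereas the genuine arithmetic content is entirely contained in Lemma~\ref{l:main} and need not be reproved.
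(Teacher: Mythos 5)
Your proposal is correct and follows essentially the same route as the paper: show via the isometry property that colliding preimages must be lattice neighbours, rewrite the collision condition as fractional-part inequalities for the forms $L_1,L_2$ with window lengths $1-|\cos\varphi|$ and $1-|\sin\varphi|$, and invoke Lemma \ref{l:main}. One small point of care: Lemma \ref{l:main} is stated only for boxes anchored at the origin, $0\le\{L_i\}<t_i$, and its rational case (Case 3) is proved by constructing solutions with $\{L_1\}=\{L_2\}=0$, so your plan of fixing the displacement $(1,0)$ and ``shifting the box'' is not directly licensed there; the clean fix, which is what the paper does, is to choose the displacement direction according to the quadrant of $\varphi$ (e.g.\ $(0,1)$ for $\varphi\in\left]\frac{3\pi}{2},2\pi\right[$, $(1,0)$ for $\varphi\in\left]0,\frac{\pi}{2}\right[$, etc.) so that both fractional-part conditions take the anchored form, which suffices since only a lower bound along some one direction is needed.
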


\begin{proof}
Let $(a_1,b_1), (a_2,b_2) \in \Z^2$ such that $r_{\varphi}(a_1,b_1)=r_{\varphi}(a_2,b_2)$. Then
\begin{eqnarray*}
  \lfloor a_1 \cos \varphi - b_1 \sin \varphi\rfloor &=& \lfloor a_2 \cos \varphi - b_2 \sin \varphi  \rfloor\\
  \lfloor a_1 \sin \varphi + b_1 \cos \varphi\rfloor &=& \lfloor a_2 \sin \varphi + b_2 \cos \varphi\rfloor,
\end{eqnarray*}
which implies
\begin{eqnarray*}
  (a_1-a_2) \cos \varphi - (b_1-b_2) \sin \varphi &=& u_1   \\
  (a_1-a_2) \sin \varphi + (b_1-b_2) \cos \varphi &=& u_2
\end{eqnarray*}
for some $-1<u_1,u_2<1$. After squaring and adding these equations we obtain
$$
(a_1-a_2)^2 + (b_1-b_2)^2 = u_1^2 + u_2^2 <2.
$$
As $a_1-a_2$ and $b_1-b_2$ are integers the right hand side can be either zero ore one. The first case is excluded because either $a_1-a_2$ or $b_1-b_2$ is nonzero.
Thus we have the following four possibilities:
$$\begin{array}{c|c|c|c|c}
	a_1-a_2 & -1 & 0 & 0 & 1\\
	\hline
	b_1-b_2 & 0 & -1 & 1 & 0
	\end{array}$$

This implies that if two grid points have the same image by $r_{\varphi},$ then they are neighbors.

Now, we show that for each $\varphi$ there exist infinitely many neighbors such that
$r_{\varphi}(a_1,b_1)=r_{\varphi}(a_2,b_2).$

First, we assume $a_1=a_2$ and $b_2=b_1+1.$ Inserting them into the starting equations we obtain
\begin{equation}\label{e1}
\lfloor a_1\cos\varphi-b_1\sin\varphi\rfloor = \lfloor a_1\cos\varphi-b_1\sin\varphi -\sin\varphi\rfloor
\end{equation}
and
\begin{equation}\label{e2}
\lfloor a_1\sin\varphi+b_1\cos\varphi\rfloor = \lfloor a_1\sin\varphi+b_1\cos\varphi +\cos\varphi\rfloor
\end{equation}

If $\varphi\in \left]\frac{3\pi}{2},2\pi\right[,$ then $\sin\varphi<0$ and $\cos\varphi>0$ and the system of
equations \eqref{e1} and \eqref{e2} holds if and only if

\begin{equation}\label{e3}
	0 \leq \{L_1(a_1,b_1)\} = \{a_1\cos\varphi-b_1\sin\varphi\} < 1 + \sin \varphi
\end{equation}
and
\begin{equation}\label{e4}
	0 \leq \{L_2(a_1,b_1)\} = \{a_1\sin\varphi+b_1\cos\varphi\} < 1 - \cos \varphi.
\end{equation}

Observe that $L_1,L_2$ are the linear forms introduced in Lemma \ref{l:main}, further setting $t_1=1 + \sin \varphi,\; t_2 = 1 - \cos \varphi$ the lemma implies our statement in this case.

The other cases can be handled similarly we give only the important data for repeating the argument.

If $a_2=a_1+1$ and $b_2=b_1,$ then \eqref{e1}, \eqref{e2} reads $[L_1(a_1,b_1)] = [L_1(a_1,b_1) + \cos \varphi]$ and $[L_2(a_1,b_1)] = [L_2(a_1,b_1) + \sin \varphi]$. If $\varphi \in ]0,\frac{\pi}{2}[$ then $\sin \varphi,\cos \varphi>0$ and setting $t_1=1 - \cos \varphi,\; t_2 = 1 - \sin \varphi$ we can apply Lemma \ref{l:main}.

If $a_2=a_1$ and $b_2=b_1-1,$ then \eqref{e1}, \eqref{e2} reads $[L_1(a_1,b_1)] = [L_1(a_1,b_1) + \sin \varphi]$ and $[L_2(a_1,b_1)] = [L_2(a_1,b_1) - \cos \varphi]$. If $\varphi \in ]\frac{\pi}{2},\pi[$ then $\sin \varphi>0$ and $\cos \varphi<0$ and setting $t_1=1 - \sin \varphi,\; t_2 = 1 + \cos \varphi$ we are done by Lemma \ref{l:main}.

Finally if $a_2=a_1-1$ and $b_2=b_1,$ then \eqref{e1}, \eqref{e2} reads $[L_1(a_1,b_1)] = [L_1(a_1,b_1) - \cos \varphi]$ and $[L_2(a_1,b_1)] = [L_2(a_1,b_1) - \sin \varphi]$. If $\varphi \in ]\pi,3\frac{\pi}{2}[$ then $\sin \varphi,\cos \varphi<0$ and setting $t_1=1 + \cos \varphi,\; t_2 = 1 + \sin \varphi$ finishes the proof by Lemma \ref{l:main}.
\end{proof}

\section{Surjectivity of digital rotation}

In the last section we proved that the digital rotation is usually not injective. Now we prove that usually it is not surjective either. To achieve our goal we need an elementary geometric lemma. To state and prove it we introduce some notation. For a point $(a,b)\in \mathbb{R}^2$ put $T_{(a,b)} = (a,b)+ U$ and $\bar{T}_{(a,b)}=(a,b)+ \bar{U}$, where $+$ means here translation. The squares $T_{(a,b)}, \; (a,b)\in \Z^2$ are disjoint and their union cover $\mathbb{R}^2$. As $A_{\varphi}$ is a rotation the sets $A_{\varphi}(T_{(a,b)})$ are squares too with the same properties. Thus there exists for any $(n,m)\in \mathbb{Z}^2$ unique $(a,b)\in \Z^2$ such that $(n,m) \in A_{\varphi}(T_{(a,b)})$.


\begin{lemma} \label{l:surj}
	Let $(n,m)\in \Z^2.$ Then $(n,m)\neq r_{\varphi}(a,b)$ for each $(a,b)\in \Z^2$ if and only if $(n,m)\in
A_{\varphi}(T_{(a,b)})$ and $A_{\varphi}(a+\varepsilon_a, b+\varepsilon_b)\notin T_{(n,m)}$ for each
$\varepsilon_a,\varepsilon_b\in \{0,1\}.$ Moreover in this case the points $A_{\varphi}(a+\varepsilon_a, b+\varepsilon_b), \varepsilon_a,\varepsilon_b\in \{0,1\}$ belong in some order to the horizontal and vertical neighbour squres to $T_{(n,m)}$.
\end{lemma}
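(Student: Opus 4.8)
The plan is to reason geometrically using the tiling of $\R^2$ by the half-open unit squares $T_{(a,b)}$ and their rotated images $A_\varphi(T_{(a,b)})$. First I would establish the key reformulation: $(n,m) = r_\varphi(a,b)$ means $A_\varphi(a,b) \in T_{(n,m)}$, i.e.\ $\lfloor A_\varphi(a,b)\rfloor = (n,m)$. So $(n,m)$ has \emph{no} preimage precisely when, for every $(a,b)\in\Z^2$, the image $A_\varphi(a,b)$ avoids the square $T_{(n,m)}$. The point of the lemma is to localize this condition to the four lattice points that are candidates: since the rotated tiles $A_\varphi(T_{(a,b)})$ partition the plane, there is a unique $(a,b)$ with $(n,m)\in A_\varphi(T_{(a,b)})$, equivalently $A_\varphi^{-1}(n,m)\in T_{(a,b)}$, so $(a,b) = \lfloor A_\varphi^{-1}(n,m)\rfloor$. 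I would argue that the only lattice points whose $A_\varphi$-images can possibly land in the small square $T_{(n,m)}$ are among the four corners $(a+\varepsilon_a,b+\varepsilon_b)$ with $\varepsilon_a,\varepsilon_b\in\{0,1\}$ of this distinguished cell.

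To make that localization rigorous, I would note that $(n,m)\in A_\varphi(T_{(a,b)})$ puts $A_\varphi^{-1}(n,m)$ inside the unit cell $T_{(a,b)}$, so $A_\varphi^{-1}(n,m)$ is within Euclidean distance $\sqrt 2$ of each of the four corners $(a+\varepsilon_a,b+\varepsilon_b)$ and is strictly farther than that (in the relevant coordinate directions) from every other lattice point. Because $A_\varphi$ is an isometry, the image $A_\varphi(c)$ of any lattice point $c$ lies in $T_{(n,m)}$ only if $c$ is within distance $\sqrt2$ of $A_\varphi^{-1}(n,m)$; a short case-check shows the only lattice points meeting this constraint and capable of mapping into the open unit cell $T_{(n,m)}$ are the four corners. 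This yields the equivalence: $(n,m)$ has no preimage iff $(n,m)\in A_\varphi(T_{(a,b)})$ and none of the four corner images $A_\varphi(a+\varepsilon_a,b+\varepsilon_b)$ lies in $T_{(n,m)}$.

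For the final ``moreover'' clause I would use that $A_\varphi(T_{(a,b)})$ is itself a half-open \emph{unit} square (a rotated tile) containing the lattice point $(n,m)$, and that its four corners are exactly the $A_\varphi(a+\varepsilon_a,b+\varepsilon_b)$. Under the standing hypothesis that none of these corners lies in $T_{(n,m)}$ itself, each corner must instead fall into one of the eight squares $T_{(n',m')}$ surrounding $T_{(n,m)}$. I would then argue, by a continuity/area argument on the rotated tile, that the four corners cannot crowd into the diagonal neighbours: a rotated unit square straddling a lattice point, none of whose corners sits in the central cell, must place its corners so that they occupy the four edge-adjacent (horizontal and vertical) neighbour squares, one in each, in some cyclic order. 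This is where I expect the main obstacle to lie --- a clean verification that the diagonal neighbours are excluded and that exactly one corner falls into each of the four edge neighbours. I would handle it by examining how the boundary of $A_\varphi(T_{(a,b)})$ crosses the grid lines through $(n,m)$: since a rotated unit square has diameter $\sqrt2<2$, it cannot reach across a diagonal neighbour while still containing $(n,m)$, and since its four edges are partitioned by the two grid lines through $(n,m)$, the four corners distribute one per edge-neighbour.
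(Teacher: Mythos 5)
Your plan hinges on an unconditional localization claim: that the only lattice points whose images can land in $T_{(n,m)}$ are the four corners of the cell $T_{(a,b)}$ containing $A_{\varphi}^{-1}(n,m)$. That claim is false, and the ``short case-check'' you defer cannot be carried out. Concretely, take $\varphi=\pi/6$ and choose $(n,m)$ so that $p:=\{A_{\varphi}^{-1}(n,m)\}$ is close to $(1,0)$, say $p=(1-\varepsilon,\varepsilon)$ with $\varepsilon>0$ small (such $(n,m)$ exist by the equidistribution of Theorem \ref{th:Kronecker}). With $(a,b)=\lfloor A_{\varphi}^{-1}(n,m)\rfloor$ one computes
$A_{\varphi}\bigl((a+2,b)-A_{\varphi}^{-1}(n,m)\bigr)=A_{\varphi}(1+\varepsilon,-\varepsilon)\approx\bigl(\tfrac{\sqrt3}{2},\tfrac12\bigr)\in U$,
so $r_{\varphi}(a+2,b)=(n,m)$, while all four corners map outside $T_{(n,m)}$: $A_{\varphi}(\varepsilon,-\varepsilon)$ has negative second coordinate, and $A_{\varphi}(-1+\varepsilon,-\varepsilon)\approx(-\tfrac{\sqrt3}{2},-\tfrac12)$, $A_{\varphi}(\varepsilon,1-\varepsilon)\approx(-\tfrac12,\tfrac{\sqrt3}{2})$, $A_{\varphi}(-1+\varepsilon,1-\varepsilon)$ all have a negative coordinate. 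So a fifth lattice point can be a preimage even when no corner is; your distance-$\sqrt2$ bound only narrows the candidates to the sixteen points $(a+i,b+j)$, $-1\le i,j\le 2$, and the remaining twelve cannot all be excluded. The same configuration defeats your argument for the ``moreover'' clause: there $A_{\varphi}(a,b)\in T_{(n-1,m-1)}$, a diagonal neighbour, even though the rotated square contains $(n,m)$; the diameter bound $\sqrt2<2$ does not keep a corner out of a diagonal neighbour, because the closed diagonal neighbour touches the point $(n,m)$ itself.

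The paper's argument is structured quite differently and uses the hypothesis that no corner lands in $T_{(n,m)}$ in a quantitative way: it derives coordinate inequalities for the four rotated corners, decomposes $T_{(n,m)}\setminus A_{\varphi}(\overline{T_{(a,b)}})$ into three triangles $H_1,H_2,H_3$, shows each triangle lies in $A_{\varphi}(T_{(a',b')})$ for one specific neighbouring cell containing a single relevant lattice point, and rules that point out; the placement of the corner images in the edge-adjacent squares falls out of the same analysis. None of that machinery appears in your plan, and the example above shows it cannot be supplied by a pure distance or area argument: the corner hypothesis (indeed, together with the positional information of the ``moreover'' clause) must enter essentially. Note also that the configuration above is a genuine stress test for the equivalence as literally stated, so before reworking the proof you should pin down precisely which version of the statement you are proving.
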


\begin{proof}
Necessity: If $A_{\varphi}(a+\varepsilon_a, b+\varepsilon_b)\in T_{(n,m)}$ for some
$\varepsilon_a,\varepsilon_b\in \{0,1\},$ then $r_{\varphi}(a+\varepsilon_a, b+\varepsilon_b)=(n,m),$ thus $(n,m)$
is the image of some point.

\bigskip

Sufficiency: We have $T_{(a,b)}=(a,b)+T_{(0,0)},$ therefore
$A_{\varphi}(T_{(a,b)})=A_{\varphi}(a,b)+A_{\varphi}(T_{(0,0)}),$ since $A_{\varphi}$ is linear. The same holds for
the closure of $T_{(a,b)},$ denoted by $\overline{T_{(a,b)}}.$ In Figure \ref{fig:1} we show the four main situations of the rotated unitgrid.

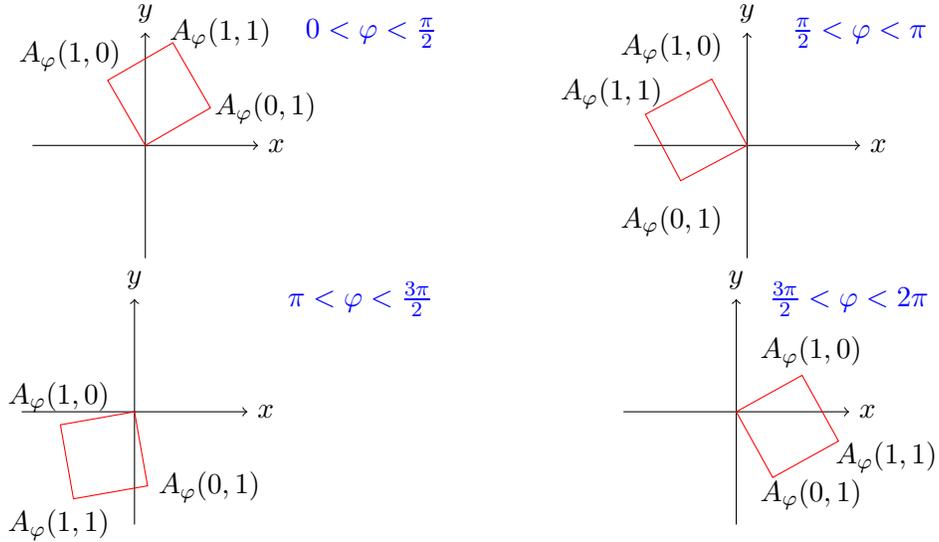
\begin{figure}[htpb]
\begin{tikzpicture}[scale=1.0]
	\draw[->] (-1.5,0)--(1.5,0) node[right]{$x$};
	\draw[->] (0,-1.5)--(0,1.5) node[above]{$y$};
	\draw (1.6,0.5) node{$A_{\varphi}(0,1)$};
	\draw (1,1.5) node{$A_{\varphi}(1,1)$};
	\draw (-1,1.2) node{$A_{\varphi}(1,0)$};
	\draw[blue] (3,1.5) node{$0<\varphi<\frac{\pi}{2}$};
    \draw[red,rotate around={30:(0,0)}] (0,0) rectangle (1,1);

	\draw[->] (6.5,0)--(9.5,0) node[right]{$x$};
	\draw[->] (8,-1.5)--(8,1.5) node[above]{$y$};
	\draw (7,-1) node{$A_{\varphi}(0,1)$};
	\draw (6.2,0.7) node{$A_{\varphi}(1,1)$};
	\draw (7,1.3) node{$A_{\varphi}(1,0)$};
	\draw[blue] (9.5,1.5) node{$\frac{\pi}{2}<\varphi<\pi$};
	\draw[red,rotate around={118:(8,0)}] (8,0) rectangle (9,1);
\end{tikzpicture}

\begin{tikzpicture}[scale=1.0]
	\draw[->] (-1.5,0)--(1.5,0) node[right]{$x$};
	\draw[->] (0,-1.5)--(0,1.5) node[above]{$y$};
	\draw[red,rotate around={190:(0,0)}] (0,0) rectangle (1,1);
	\draw (1,-1) node{$A_{\varphi}(0,1)$};
	\draw (-1,-1.5) node{$A_{\varphi}(1,1)$};
	\draw (-1,0.2) node{$A_{\varphi}(1,0)$};
	\draw[blue] (3,1.5) node{$\pi<\varphi<\frac{3\pi}{2}$};
\draw[->] (6.5,0)--(9.5,0) node[right]{$x$};
\draw[->] (8,-1.5)--(8,1.5) node[above]{$y$};
\draw[red,rotate around={299:(8,0)}] (8,0) rectangle (9,1);
\draw (9,-1.1) node{$A_{\varphi}(0,1)$};
\draw (10,-0.6) node{$A_{\varphi}(1,1)$};
\draw (9,0.8) node{$A_{\varphi}(1,0)$};
	\draw[blue] (9.5,1.5) node{$\frac{3\pi}{2}<\varphi<2\pi$};
\end{tikzpicture}
\caption{The situation of $A_{\varphi}(T_{(0,0)})$}
\label{fig:1}
\end{figure}

We give the proof of the lemma in detail only for the case $\frac{3\pi}{2}<\varphi<2\pi$, the other cases can be handled similarly. Assume that $(n,m)\in A_{\varphi}(T_{(a,b)})$, but $A_{\varphi}(a+\varepsilon_a, b+\varepsilon_b)\notin T_{(n,m)}$ for each $\varepsilon_a,\varepsilon_b\in \{0,1\}.$ Then we have
$$A_{\varphi}(a,b+1)_y < m, \mbox{   } A_{\varphi}(a+1,b)_y\geq m+1,$$
$$A_{\varphi}(a,b)_x < n, \mbox{   } A_{\varphi}(a+1,b+1)_x\geq n+1.$$
Here and in the sequel, $A_{\varphi}(.,.)_x, A_{\varphi}(.,.)_y$ denote the $x$ (respectively $y$) coordinate of the corresponding point. Notice that the two strong inequalities are due to the assumption $A_{\varphi}(a+\varepsilon_a, b+\varepsilon_b)\notin T_{(n,m)}$ for each $\varepsilon_a,\varepsilon_b\in \{0,1\}.$

We show that $T_{(n,m)}\setminus A_{\varphi}(\overline{T_{(a,b)}})$ is the union of three disjoint triangles
$H_1,H_2,H_3$. The triangle $H_1$ is bordered by the lines $x=n, y=m+1$ and by the line sequent between the points  $A_{\varphi}(a,b)$ and $A_{\varphi}(a+1,b)$. Similarly $H_2$ is bordered by the lines $x=n+1, y=m+1$ and by the line sequent between the points $A_{\varphi}(a+1,b)$ and $A_{\varphi}(a+1,b+1)$. Finally the borders of $H_3$ are the lines $x=n,y=m+1$ and the line segment between the points $A_{\varphi}(a,b+1)$ and $A_{\varphi}(a+1,b+1)$.

That are proper triangles. For example, look at $H_1.$ The triangle with vertices $A_{\varphi}(a+1,b), \; A_{\varphi}(a+1,b+1)$ and the intersection of the lines $x=A_{\varphi}(a+1,b)_x$ and $y = A_{\varphi}(a+1,b+1)_y$ is rectangular and the legth of its hypotenuse is $1$, thus $A_{\varphi}(a+1,b+1)_x- A_{\varphi}(a+1,b)_x <1$, which together with the above inequalities implies $A_{\varphi}(a+1,b)_x> n$. As $A_{\varphi}(a,b)_x < n$ the line $x=n$ has an
intersection with the line segment between $A_{\varphi}(a,b)$ and $A_{\varphi}(a+1,b),$ which is different from the end points. Similarly we have $A_{\varphi}(a,b)_y - A_{\varphi}(a+1,b)_y<1,$ i.e.~$A_{\varphi}(a,b)_y<m+1,$ hence the line $y=m+1$ intersects the line segment between $A_{\varphi}(a,b)$ and $A_{\varphi}(a+1,b).$ In this case it may happen that the intersection point is $A_{\varphi}(a+1,b).$

As a byproduct we proved $A_{\varphi}(a+1,b)\in T_{(n,m+1)}$, i.e $r_{\varphi}(a+1,b) = (n,m+1)$. Performing similar arguments for $H_2$ and $H_3$ we obtain that $A_{\varphi}(a+1,b+1)\in T_{(n+1,m)}$, i.e $r_{\varphi}(a+1,b+1) = (n+1,m)$ and $A_{\varphi}(a,b+1)\in T_{(n,m-1)}$, i.e $r_{\varphi}(a,b+1) = (n,m-1)$ respectively. Finally, one can prove $A_{\varphi}(a,b)\in T_{(n-1,m)}$, i.e $r_{\varphi}(a,b) = (n-1,m)$ too, hence the second assertion is proved.

\medskip

No we finalize the proof of the first assertion. We assume that there exists $(c,d)\in \Z^2,$ such that $r_{\varphi}(c,d)=(n,m).$ Then $A_{\varphi}(c,d)\in
T_{(n,m)}$ and $(c,d)\notin \overline{T_{(a,b)}},$ i.e.~$A_{\varphi}(c,d)\in T_{(n,m)}\setminus
A_{\varphi}(\overline{T_{(a,b)}}),$ hence $A_{\varphi}(c,d)$ is contained in one of the triangles $H_1,H_2,H_3.$
We have $H_1\subseteq A_{\varphi}(T_{(a,b-1)}),$ which contains only the grid point
$A_{\varphi}(a,b-1),$ therefore $(c,d)=(a,b-1).$ In contrast, $A_{\varphi}(a,b-1)_x< A_{\varphi}(a,b)< n,$
i.e.~$\lfloor A_{\varphi}(a,b-1)_x\rfloor < n= \lfloor A_{\varphi}(c,d)_x\rfloor,$ which is a contradiction. We
have $H_2\subseteq A_{\varphi}(T_{(a+1,b)})$ and $H_3 \subseteq A_{\varphi}(T_{(a,b+1)}),$ which implies
$(c,d)=(a+1,b)$ and $(c,d)=(a,b+1)$ respectively. The proof of the case $\frac{3\pi}{2}<\varphi<2\pi$ is finished. As we mentioned above the three other cases can be handled similarly.
\end{proof}

Similarly to Section \ref{s:inj} we introduce the function $N_{\varphi}(M)$, which is the number of grid points $(n,m)$, such that $|n|,|m|\le M $ and which are images of no grid points under the mapping $r_{\varphi}$. If $\varphi = k\frac{\pi}{2}, k \in \Z$ then $r_{\varphi}$ is bijective, hence $N_{\varphi}(M)=0$. By the next theorem this cannot happen otherwise.

\begin{thm} \label{t:surj}
If $\varphi \not= k\frac{\pi}{2}, k \in \Z$ then there exist constants $c_5,c_6>0$ depending only on $\varphi$ such that
$c_5M^2 \le T_{\varphi}(M) \le c_6M^2$ except when $\cos\varphi = \pm \sin\varphi + r,\; r\in \Q$, in which case $c_5M \le T_{\varphi}(M) \le c_6M$ hold.
\end{thm}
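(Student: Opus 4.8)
The plan is to mirror the proof of Theorem~\ref{t:inj}: use Lemma~\ref{l:surj} to encode the property ``$(n,m)$ has no preimage'' as a system of fractional-part inequalities, and then count the solutions with Lemma~\ref{l:main}. First I would pass from the range to the domain. By Lemma~\ref{l:surj}, a point $(n,m)$ with no preimage determines a \emph{unique} base point $(a,b)$ with $(n,m)\in A_\varphi(T_{(a,b)})$, and conversely each base point $(a,b)$ whose four images $A_\varphi(a+\varepsilon_a,b+\varepsilon_b)$, $\varepsilon_a,\varepsilon_b\in\{0,1\}$, occupy the four edge-neighbours of the central cell they surround while missing it produces exactly one such missed point. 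This is a bijection. Since $A_\varphi$ is an isometry, the windows $|a|,|b|\le M$ and $|n|,|m|\le M$ differ only by the factor $\sqrt2$ and an additive $O(1)$; as Lemma~\ref{l:main} is homogeneous in $M$, this distortion is absorbed into the constants, and it suffices to count base points in the above configuration with $|a|,|b|\le M$.

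Next I would translate the configuration into inequalities on $\xi=\{L_1(a,b)\}$ and $\eta=\{L_2(a,b)\}$. Writing $A_\varphi(a,b)=(L_1(a,b),L_2(a,b))$ and expressing the other corners through $A_\varphi(1,0)=(\cos\varphi,\sin\varphi)$ and $A_\varphi(0,1)=(-\sin\varphi,\cos\varphi)$, the demand that each corner fall into its prescribed neighbouring cell becomes a system of floor conditions. Carrying this out in the representative range $0<\varphi<\frac{\pi}{2}$, the system collapses to
\[
1-\cos\varphi\le\xi<\sin\varphi,\qquad 2-\sin\varphi-\cos\varphi\le\eta<1,
\]
so $(\xi,\eta)$ must lie in a square $R$ of side $\sin\varphi+\cos\varphi-1$. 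The three remaining quadrants are treated identically (or via the symmetries $\varphi\mapsto\varphi+\frac{\pi}{2}$), always yielding a box of side $|\sin\varphi|+|\cos\varphi|-1$, which is positive precisely because $\varphi\ne k\frac{\pi}{2}$.

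I would then apply Lemma~\ref{l:main} to the target box $R$, obtaining $\asymp M^2$ base points in the generic case and $\asymp M$ in the exceptional case $\cos\varphi=\pm\sin\varphi+r$, which by the reduction gives the two regimes for $N_\varphi(M)$. A clean shortcut for most of this is the balancing identity
\[
N_\varphi(M)=T_\varphi(M)+O(M).
\]
Indeed, by the neighbour analysis of Theorem~\ref{t:inj} every cell $T_{(n,m)}$ contains at most two points of the unit-covolume lattice $A_\varphi(\Z^2)$; if $C_0,C_1,C_2$ denote the numbers of cells in $|n|,|m|\le M$ containing $0,1,2$ such points, then $C_0+C_1+C_2=(2M+1)^2$ while $C_1+2C_2$ equals the number of lattice points of $A_\varphi(\Z^2)$ in the window, which is $(2M+1)^2+O(M)$; subtracting gives $C_0-C_2=O(M)$, that is $N_\varphi(M)=T_\varphi(M)+O(M)$. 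With Theorem~\ref{t:inj} this already settles both bounds in the generic case and the upper bound $N_\varphi(M)=O(M)$ in the exceptional case.

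The main obstacle is the exceptional lower bound, and it is also the reason a little more than Theorem~\ref{t:inj} is needed: since there $T_\varphi(M)$ and the error term in the identity are both $\asymp M$, the identity cannot deliver $N_\varphi(M)\ge c_5M$. I would therefore prove this directly, which forces two technical points. First, Lemma~\ref{l:main} is stated for boxes anchored at the origin, whereas $R$ is shifted; the equidistribution cases extend verbatim, so only the exceptional case matters. There the attainable pairs $(\xi,\eta)$ accumulate densely on finitely many lines of slope $\pm1$ on the torus $[0,1)^2$, which is exactly the coupling $u p_1-vq=\pm(uq+vp_1)$ of Case~2.1. For the origin-anchored boxes of Theorem~\ref{t:inj} the line through $(0,0)$ automatically meets the box; for the shifted square $R$ one must instead verify that, after choosing the residues in the construction of Case~2.1 suitably, at least one of these diagonal lines crosses $R$ in a segment of positive length. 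Establishing this intersection --- equivalently $N_\varphi(M)\ge c_5M$ --- is the crux of the argument, and the positive side length $|\sin\varphi|+|\cos\varphi|-1$ of $R$ is what makes it possible.
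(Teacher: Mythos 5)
Your overall architecture coincides with the paper's: Lemma~\ref{l:surj} reduces ``$(n,m)$ has no preimage'' to the four corners of $A_{\varphi}(T_{(a,b)})$ landing in the four edge-neighbours of $T_{(n,m)}$, this becomes a box condition on $(\{L_1(a,b)\},\{L_2(a,b)\})$, and Lemma~\ref{l:main} does the counting; your box $[1-\cos\varphi,\sin\varphi)\times[2-\sin\varphi-\cos\varphi,1)$ for $0<\varphi<\frac{\pi}{2}$ agrees exactly with what the paper obtains before it recentres. Two things are genuinely different. First, your balancing identity $N_{\varphi}(M)=T_{\varphi}(M)+O(M)$ (via $C_0+C_1+C_2=(2M+1)^2$ and $C_1+2C_2=(2M+1)^2+O(M)$, using that no unit cell can contain three points of the unit-covolume isometric lattice $A_{\varphi}(\Z^2)$) is not in the paper at all; it is correct and it cleanly settles the generic case and the exceptional upper bound from Theorem~\ref{t:inj} alone, which is arguably tidier than the paper's case-by-case recomputation. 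Second, you and the paper diverge on how to cope with the fact that the box is not anchored at the origin: the paper doubles the variables and recentres, rewriting the condition as \eqref{five}--\eqref{six}, i.e.\ as a symmetric window $[1-\sin\varphi-\cos\varphi,\sin\varphi+\cos\varphi-1)$ about an odd integer in the odd arguments $x_1=2a+1$, $x_2=2b+1$, and then invokes the ``pairs of odd integers'' clause of Lemma~\ref{l:main}; you instead propose to extend Lemma~\ref{l:main} to shifted boxes directly.

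That second point is where your proposal has a genuine gap. For the equidistribution cases your extension is indeed verbatim, but in the exceptional case $\cos\varphi=\pm\sin\varphi+r$ (Case~2.1 of Lemma~\ref{l:main}) the attainable pairs $(\{L_1\},\{L_2\})$ are confined to finitely many translates of a line on the torus, and you explicitly defer the verification that one of these translates meets the shifted square $R$ in a segment of positive length --- you call it the crux and assert that the side length $|\sin\varphi|+|\cos\varphi|-1>0$ makes it work, but you do not prove it. Without that step the exceptional lower bound $N_{\varphi}(M)\ge c_5M$ is not established, and it does not follow from your balancing identity, since there the main term and the error term are both of order $M$. (The same remark applies to the subcase $\sin\varphi,\cos\varphi\in\Q$, where the attainable pairs form a finite set and one must exhibit a residue $d_1$ landing in $R$ rather than taking $d_1=0$ as in the origin-anchored situation.) To be fair, you have put your finger on a point the paper itself treats tersely: after recentring, the paper's condition asks that $L_i(2a+1,2b+1)$ be close to an \emph{odd} integer, a condition modulo $2$ rather than modulo $1$, so its appeal to Lemma~\ref{l:main} is not literally immediate either. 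Still, a complete proof must either carry out your intersection argument or reproduce the paper's parity device in full; as written, the exceptional lower bound remains open in your proposal.
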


An immediate consequence is
\begin{cor}
  If $\varphi\not= k\frac{\pi}{2}$ then $r_{\varphi}$ has infinitely many different orbits.
\end{cor}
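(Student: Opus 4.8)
The plan is to deduce the statement from the non-surjectivity established in Theorem \ref{t:surj} rather than from the injectivity result of Section \ref{s:inj}, since it is the \emph{sources} of the dynamics -- the lattice points lying in no image -- that cleanly separate distinct orbits. Recall that the orbit of $r_{\varphi}$ generated by $(x,y)$ is the forward sequence $\{r_{\varphi}^n(x,y)\}_{n=0}^{\infty}$, and write $O(p)=\{r_{\varphi}^n(p):n\ge 0\}$ for the set of points it visits.

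First I would extract from Theorem \ref{t:surj} only the qualitative fact that suffices here: since $N_{\varphi}(M)\to\infty$ as $M\to\infty$ (whether the growth is of order $M^2$ or, in the exceptional case, of order $M$), there are infinitely many lattice points $p\in\Z^2$ that are the image of no grid point under $r_{\varphi}$, i.e.\ $p\notin r_{\varphi}(\Z^2)$. Call such a point a \emph{source}.

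The key observation is that a source occurs in an orbit only as its generator. Indeed, if $p$ is a source and $p\in O(q)$, then $p=r_{\varphi}^{k}(q)$ for some $k\ge 0$; were $k\ge 1$ we would have $p\in r_{\varphi}(\Z^2)$, contradicting that $p$ is a source, so $k=0$ and $p=q$. Consequently the assignment $p\mapsto O(p)$, restricted to sources, is injective: if $p_1,p_2$ are sources with $O(p_1)=O(p_2)$, then $p_1\in O(p_2)$ forces $p_1=p_2$. Since there are infinitely many sources, they generate infinitely many pairwise distinct orbits, which is the assertion. The same reasoning shows these orbits stay distinct even should their tails eventually merge into a common cycle, so the conclusion is insensitive to whether one reads \emph{orbit} as a sequence or as the set $O(p)$.

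There is essentially no obstacle beyond Theorem \ref{t:surj}; the only point demanding care is fixing the meaning of \emph{different orbits}. The source argument is robust precisely because a source is visited by its own orbit alone, so I would make that definitional choice explicit at the outset to guarantee that distinct sources genuinely certify distinct orbits.
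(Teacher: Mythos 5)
Your argument is correct and coincides with the paper's intent: the corollary is stated as an immediate consequence of Theorem \ref{t:surj}, and the implicit reasoning is exactly yours, namely that each of the infinitely many lattice points with empty preimage can occur in an orbit only as its generator, so distinct such points certify distinct orbits. Your explicit handling of the ``source'' observation and of what \emph{different orbits} means simply spells out what the paper leaves unsaid.
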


\noindent
{\it Proof of Theorem \ref{t:surj}.}

  Let $(n,m)\in \Z^2$, which is the image of no grid points under $r_{\varphi}$. Then, by Lemma \ref{l:surj}, there exists $(a,b)\in \Z^2$ such that $(n,m)\in A_{\varphi}(T_{(a,b)})$ and $A_{\varphi}(a+\varepsilon_a,b+\varepsilon_b)\notin T_{(n,m)}$ for all   $\varepsilon_a,\varepsilon_b \in \{0,1\}$. In the same lemma we proved that $A_{\varphi}(a,b),A_{\varphi}(a,b+1),A_{\varphi}(a+1,b),A_{\varphi}(a+1,b+1)$ belong in some order to the four unit squares left, right, top and down to the unit square $T_{(n,m)}$. Depending on the size of $\varphi$ we distinguish four cases.

\medskip

  {\bf Case 1. $0\le \varphi< \frac{\pi}{2}$. }
  Then by Lemma \ref{l:surj} $A_{\varphi}(a,b)\in T_{(n,m-1)}, A_{\varphi}(a,b+1)\in T_{(n-1,m)}, A_{\varphi}(a+1,b)\in T_{(n+1,m)},
  A_{\varphi}(a+1,b+1)\in T_{(n,m+1)}$, which means
  \begin{eqnarray*}
    0 &\le& a\cos \varphi - b \sin \varphi -n < 1 \\
    0 &\le& a\cos \varphi - (b+1) \sin \varphi -(n-1) < 1 \\
    0 &\le& (a+1)\cos \varphi - b \sin \varphi -(n+1) < 1 \\
    0 &\le& (a+1)\cos \varphi - (b+1) \sin \varphi -n < 1.
  \end{eqnarray*}
  Rearranging we obtain the system of inequalities
  \begin{eqnarray}
    0 &\le& a\cos \varphi - b \sin \varphi -n < 1 \label{e:egy}\\
    \sin \varphi -1 &\le& a\cos \varphi - b \sin \varphi -n < \sin \varphi \label{e:kettő}\\
    1-\cos \varphi &\le& a\cos \varphi - b \sin \varphi -n < 2 -\cos \varphi \label{e:három}\\
    \sin \varphi-\cos \varphi &\le& a\cos \varphi - b \sin \varphi -n < 1+\sin \varphi-\cos \varphi.
    \label{e:négy}
  \end{eqnarray}

  As $0\le \varphi< \frac{\pi}{2}$ we have $\sin \varphi, \cos \varphi>0$. Under this assumption we have $\sin \varphi -1<
  \sin \varphi-\cos \varphi < 1-\cos \varphi$ and $0< 1-\cos \varphi$ holds too. Hence $\max \{\sin \varphi -1,
  \sin \varphi-\cos \varphi , 1-\cos \varphi,0\} = 1-\cos \varphi$.

\medskip

  Similarly, $\sin \varphi<1 < 2 -\cos \varphi$ and $\sin \varphi< 1+\sin \varphi-\cos \varphi$, thus
  $\min\{\sin \varphi,1 , 2 -\cos \varphi, 1+\sin \varphi-\cos \varphi\} = \sin \varphi$. Hence the inequalities
  \eqref{e:egy}-\eqref{e:négy} hold if and only if
$$
     1-\cos \varphi \le a\cos \varphi - b \sin \varphi -n < \sin \varphi.
$$
After multiplying by $2$ and adding $\cos \varphi-\sin \varphi -1$ we obtain
  \begin{equation}\label{five}
   1- \cos \varphi -\sin \varphi \le (2a+1)\cos \varphi - (2b+1) \sin \varphi -2n-3 <  \sin \varphi + \cos \varphi-1.
  \end{equation}

  Performing the analogous computation for $m$ we get that $a,b,m\in \Z$ satisfy the requirements if and only if
  \begin{equation}\label{six}
1- \cos \varphi -\sin \varphi \le (2a+1)\sin\varphi + (2b+1) \cos \varphi -2m-1 <  \sin \varphi + \cos \varphi-1.  \end{equation}
  As $\sin \varphi + \cos \varphi-1>0$, hence $1-\sin \varphi - \cos \varphi <0$  we can apply Lemma \ref{l:main} tothe system of inequalities \eqref{five} and \eqref{five} with $x_1=2a+1$ and $x_2=2b+1$, which proves the theorem in this case.

\bigskip

 {\bf Case 2. $\frac{\pi}{2} \le \varphi< \pi$. }
  Then by Lemma \ref{l:surj} $A_{\varphi}(a,b)\in T_{(n+1,m)}, A_{\varphi}(a,b+1)\in T_{(n,m-1)},
 A_{\varphi}(a+1,b)\in T_{(n,m+1)}, A_{\varphi}(a+1,b+1)\in T_{(n-1,m)}$. Then
  \begin{eqnarray*}
   	0 &\le& a\cos \varphi - b \sin \varphi -(n+1) < 1 \\
  	0 &\le& a\cos \varphi - (b+1) \sin \varphi -n < 1 \\
    0 &\le& (a+1)\cos \varphi - b \sin \varphi -n < 1 \\
    0 &\le& (a+1)\cos \varphi - (b+1) \sin \varphi -(n-1) < 1.
  \end{eqnarray*}
  Rearranging the inequalities we obtain
  \begin{eqnarray}
  	\label{9}
  1 &\le& a\cos \varphi - b \sin \varphi -n < 2 \\
  \label{10}
  \sin \varphi  &\le& a\cos \varphi - b \sin \varphi -n < 1+\sin \varphi \\
  	\label{11}
  -\cos \varphi &\le& a\cos \varphi - b \sin \varphi -n < 1 -\cos \varphi\\
  	\label{12}
  \sin \varphi-\cos \varphi -1 &\le& a\cos \varphi - b \sin \varphi -n < \sin \varphi-\cos \varphi.
  \end{eqnarray}

\medskip
  We have $\sin \varphi>0, \cos \varphi<0$, because $\frac{\pi}{2}< \varphi<\pi $.

  Thus $\max \{\sin \varphi, 1, -\cos \varphi , \sin \varphi-\cos \varphi -1\} = 1$. Similarly, $\min\{2, 1+\sin \varphi,1 -\cos \varphi, \sin \varphi -\cos \varphi\} = \sin\varphi- \cos \varphi$.
  Hence the inequalities \eqref{9}-\eqref{12} hold if and only if
  $$
    1 \le a\cos \varphi - b \sin \varphi -n < \sin\varphi- \cos \varphi.
$$
Multiplying by $2$ and adding $-\sin \varphi+\cos \varphi -1$ we obtain
  \begin{equation*}
     1+ \cos \varphi -\sin \varphi \le (2a+1)\cos \varphi - (2b+1) \sin \varphi -2n-1 <  \sin \varphi - \cos \varphi-1.
  \end{equation*}

   Performing the analogous computation for $m$ we get that $a,b,m\in \Z$ satisfy the requirements if and only if
  \begin{equation*}
     1+ \cos \varphi -\sin \varphi \le (2a+1)\sin \varphi - (2b+1) \cos \varphi -2m-1 <  \sin \varphi - \cos \varphi-1.
  \end{equation*}

As $\sin \varphi>0$ and $\cos \varphi<0$ we have $\sin \varphi - \cos \varphi-1>0$, hence $1- \sin \varphi + \cos \varphi<0$, thus we may apply Lemma \ref{l:main} to the last system of inequalities, which completes the proof in the second case,

\bigskip

  {\bf Case 3. $\pi < \varphi< \frac{3\pi}{2}$.}
  Then by Lemma \ref{l:surj}  $A_{\varphi}(a,b)\in T_{(n,m+1)}, A_{\varphi}(a,b+1)\in T_{(n+1,m)},
  A_{\varphi}(a+1,b)\in T_{(n-1,m)}, A_{\varphi}(a+1,b+1)\in T_{(n,m-1)}$. The same computation as in Cases 1. and 2. lead to the system of inequalities

  \begin{eqnarray*}
   	1+ \sin \varphi + \cos \varphi &\le& (2a+1)\cos \varphi - (2b+1) \sin \varphi -2n-1 < -\sin \varphi - \cos \varphi -1\\
  	1+ \sin \varphi + \cos \varphi &\le& (2a+1)\cos \varphi + (2b+1) \sin \varphi -2m-1 < -\sin \varphi - \cos \varphi -1
  \end{eqnarray*}

  As $\sin \varphi, \cos \varphi,1+ \sin \varphi + \cos \varphi<0$ we have $-\sin \varphi - \cos \varphi -1>0$ and can apply Lemma \ref{l:main} again.

 \bigskip

 {\bf Case 4. $\frac{3\pi}{2} < \varphi< 2\pi$.}
  Then by Lemma \ref{l:surj}  $A_{\varphi}(a,b)\in T_{(n-1,m)}, A_{\varphi}(a,b+1)\in T_{(n,m+1)},
  A_{\varphi}(a+1,b)\in T_{(n,m-1)}, A_{\varphi}(a+1,b+1)\in T_{(n+1,m)}$. The same computation as in Cases 1. and 2. lead to the system of inequalities

  \begin{eqnarray*}
   	1+ \sin \varphi - \cos \varphi &\le& (2a+1)\cos \varphi - (2b+1) \sin \varphi -2n-1 < -\sin \varphi + \cos \varphi -1\\
  	1+ \sin \varphi - \cos \varphi &\le& (2a+1)\cos \varphi + (2b+1) \sin \varphi -2m-1 < \sin \varphi - \cos \varphi -1
  \end{eqnarray*}

  As $\sin \varphi, -\cos \varphi,1+ \sin \varphi - \cos \varphi<0$ we have $-\sin \varphi + \cos \varphi -1>0$ and can apply Lemma \ref{l:main} again, which completes the proof of the theorem.
 $\Box$

\section{Orbits with short periodicity}

Our first goal was to study the periodicity of the orbits of $r_{\varphi}$, which seems to be very difficult. As a first step we examined some other properties of $r_{\varphi}$, but did not forget the ultimate goal. In this section we present a small finding, which corresponds to $\varphi=\frac{\pi}{4}.$ We show that there are infinitely many $a\in \Z,$ such that the orbit of $r_{\varphi}$ generated by $(a,0)$ is periodic of length $8,$ i.e. $$r_{\varphi}^8(a,0)=(a,0).$$

Nevertheless we could present other examples, but this already shows that we do not have yet the necessary technique to prove much more general results.

In the next lemma we collected those identities, which are necessary to prove our periodicity result. Their proofs are one step direct computation. We denote, as usual in these notes, the fractional part of $x$ by $\{x\}.$

\begin{lemma}
	Let $a\in \Z$ and set $\omega=\lfloor \frac{1}{\sqrt{2}}a\rfloor.$ Suppose $\lfloor \sqrt{2}\omega\rfloor =a-1.$ Then

	 \begin{equation}\label{1}
	 \left\lfloor -\frac{1}{\sqrt{2}}a+\frac{1}{\sqrt{2}}\right\rfloor=-\omega	
	\end{equation}
	
	\begin{equation}\label{2}
	\left\lfloor \frac{1}{\sqrt{2}}a-\frac{1}{\sqrt{2}}\right\rfloor=\begin{cases}
	\omega, \mbox{ if } a=1	\\
	\omega-1, \mbox{ otherwise  }
	\end{cases}\end{equation}
	
	\begin{equation}\label{4}
	\left\lfloor \frac{1}{\sqrt{2}}a+\frac{1}{\sqrt{2}}\right\rfloor=\begin{cases}
	\omega, \mbox{ if } \{\frac{1}{\sqrt{2}}a\}< 1-\frac{1}{\sqrt{2}}	\\
	\omega+1, \mbox{ if } \{\frac{1}{\sqrt{2}}a\}\geq 1-\frac{1}{\sqrt{2}}	
	\end{cases}\end{equation}
	
		\begin{equation}\label{5}
	\left\lfloor -\frac{1}{\sqrt{2}}a+\sqrt{2}\right\rfloor=\begin{cases}
	-\omega, \mbox{ if } \{\frac{1}{\sqrt{2}}a\}> \sqrt{2}-1	\\
	-\omega+1, \mbox{ if } \{\frac{1}{\sqrt{2}}a\}\leq \sqrt{2}-1	
	\end{cases}\end{equation}

		\begin{equation}\label{7}
	\left\lfloor \sqrt{2}\omega+\sqrt{2}\right\rfloor=\begin{cases}
	a, \mbox{ if } \{\sqrt{2}\omega\} +\{\sqrt{2}\}<1	\\
	a+1, \mbox{ if } \{\sqrt{2}\omega\} +\{\sqrt{2}\}\geq 1
	\end{cases}\end{equation}
	
		\begin{equation}\label{8}
	\left\lfloor -\sqrt{2}\omega+\frac{1}{\sqrt{2}}\right\rfloor=\begin{cases}
	-a, \mbox{ if } \{\sqrt{2}\omega\} > \frac{1}{\sqrt{2}}\\
	-a+1, \mbox{ if } \{\sqrt{2}\omega\} \leq \frac{1}{\sqrt{2}}
	\end{cases}\end{equation}
	
\end{lemma}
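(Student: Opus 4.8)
The plan is to collapse all six identities to a single observation about where the fractional part $\theta := \{a/\sqrt{2}\}$ lies; once that is pinned down, each identity becomes a one-line interval check.

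First I would translate the hypothesis. Since $\omega = \lfloor a/\sqrt{2}\rfloor = a/\sqrt{2} - \theta$, multiplying by $\sqrt{2}$ gives
$$\sqrt{2}\,\omega = a - \sqrt{2}\,\theta,$$
so $\lfloor \sqrt{2}\,\omega\rfloor = a + \lfloor -\sqrt{2}\,\theta\rfloor$, and the assumption $\lfloor\sqrt{2}\,\omega\rfloor = a-1$ holds exactly when $\lfloor -\sqrt{2}\,\theta\rfloor = -1$, that is, when
$$0 < \theta \le \frac{1}{\sqrt{2}}.$$
From the same relation I also record $\{\sqrt{2}\,\omega\} = \{-\sqrt{2}\,\theta\} = 1 - \sqrt{2}\,\theta$, the endpoint $\sqrt{2}\,\theta = 1$ (the case $a=1$) giving $\{\sqrt{2}\,\omega\} = 0$ consistently. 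These two facts, $\theta \in (0, 1/\sqrt{2}]$ and $\{\sqrt{2}\,\omega\} = 1 - \sqrt{2}\,\theta$, are the only inputs the whole proof needs.

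Next I would dispatch the identities one by one by writing each argument as an integer plus a remainder and locating that remainder, splitting into two subintervals precisely where the stated case distinction asks. For \eqref{1} the argument is $-\omega + (\tfrac{1}{\sqrt{2}} - \theta)$, whose remainder lies in $[0, \tfrac{1}{\sqrt{2}}) \subset [0,1)$, so the floor is $-\omega$ with no case split. For \eqref{2} it is $\omega + (\theta - \tfrac{1}{\sqrt{2}})$, remainder in $(-\tfrac{1}{\sqrt{2}}, 0]$, the value $0$ occurring exactly at the boundary $\theta = \tfrac{1}{\sqrt{2}}$, i.e.\ $a=1$. For \eqref{4} and \eqref{5} the arguments are $\omega + (\theta + \tfrac{1}{\sqrt{2}})$ and $-\omega + (\sqrt{2} - \theta)$, with remainders in $(\tfrac{1}{\sqrt{2}}, \sqrt{2}]$ and $[\tfrac{1}{\sqrt{2}}, \sqrt{2})$; crossing $1$ happens precisely at $\theta = 1 - \tfrac{1}{\sqrt{2}}$ and at $\theta = \sqrt{2} - 1$, the thresholds in the statement. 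Finally \eqref{7} is the textbook addition rule $\sqrt{2}\,\omega + \sqrt{2} = a + \bigl(\{\sqrt{2}\,\omega\} + \{\sqrt{2}\}\bigr)$, and for \eqref{8} I would write $-\sqrt{2}\,\omega + \tfrac{1}{\sqrt{2}} = -a + \bigl(1 + \tfrac{1}{\sqrt{2}} - \{\sqrt{2}\,\omega\}\bigr)$, whose remainder lies in $(\tfrac{1}{\sqrt{2}}, 1+\tfrac{1}{\sqrt{2}}]$ and dips below $1$ exactly when $\{\sqrt{2}\,\omega\} > \tfrac{1}{\sqrt{2}}$.

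There is no serious obstacle here; as the surrounding text notes, each identity is a one-step computation once the reduction is in place. The only points that demand care are bookkeeping ones: extracting the clean interval $\theta \in (0, 1/\sqrt{2}]$ from the hypothesis, keeping the boundary case $a=1$ (where $\theta = 1/\sqrt{2}$ and $\{\sqrt{2}\,\omega\} = 0$) consistent across all six formulas, and, for \eqref{7} and \eqref{8}, rephrasing the case distinction in terms of $\{\sqrt{2}\,\omega\}$ rather than $\theta$ through the identity $\{\sqrt{2}\,\omega\} = 1 - \sqrt{2}\,\theta$.
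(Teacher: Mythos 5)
Your proof is correct: the reduction of the hypothesis $\lfloor\sqrt{2}\omega\rfloor=a-1$ to $\theta=\{a/\sqrt{2}\}\in\left]0,\tfrac{1}{\sqrt{2}}\right]$ (with $\theta=\tfrac{1}{\sqrt{2}}$ exactly when $a=1$) and the identity $\{\sqrt{2}\omega\}=1-\sqrt{2}\theta$ are right, and each of the six interval checks goes through. The paper omits the proof entirely, remarking only that each identity is a one-step direct computation, which is exactly what you carried out (with a tidier unifying bookkeeping than the paper bothers to record).
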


\begin{thm}\label{45fok}
	Let $a\in \Z,$ $\omega=\lfloor \frac{1}{\sqrt{2}}a\rfloor$ and suppose $\lfloor \sqrt{2}\omega\rfloor = a-1.$
If $\{\frac{1}{\sqrt{2}}a\}\in \left[1-\frac{1}{\sqrt{2}}, \frac{1}{\sqrt{2}}\right],$ then $r_{\varphi}^8 (a,0)=(a,0).$ There exist infinitely many $a\in \Z$ satisfying the assumptions.
\end{thm}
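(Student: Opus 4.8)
The plan is to establish the periodicity by tracking the orbit of $(a,0)$ under $r_{\varphi}$ through all eight iterates, at every step converting the two new coordinates into floor expressions of exactly the shape evaluated in the preceding lemma. Since $\varphi=\frac{\pi}{4}$ gives $\cos\varphi=\sin\varphi=\frac{1}{\sqrt2}$, the map reads $r_{\varphi}(x,y)=\bigl(\lfloor\frac{x-y}{\sqrt2}\rfloor,\lfloor\frac{x+y}{\sqrt2}\rfloor\bigr)$, so the new coordinates are always floors of $\frac{x\pm y}{\sqrt2}$. The whole point is that along the orbit of $(a,0)$ these arguments keep the special forms $\frac{a\pm1}{\sqrt2}$, $\frac{2-a}{\sqrt2}$, $\pm\sqrt2\,\omega+\frac{1}{\sqrt2}$ that identities \eqref{1}, \eqref{2}, \eqref{4}, \eqref{5}, \eqref{7}, \eqref{8} were collected to handle.

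Concretely, I would compute the eight iterates in turn, obtaining successively $(\omega,\omega)$, $(0,a-1)$, $(-\omega,\omega-1)$, $(1-a,-1)$, $(1-\omega,-1-\omega)$, $(1,-a)$, $(\omega+1,-\omega)$, and finally $(a,0)$ again. The first step is just the definition of $\omega$; the second uses $\lfloor\sqrt2\,\omega\rfloor=a-1$, which is exactly the standing hypothesis. The remaining transitions are read off from the lemma: the third from \eqref{1} and \eqref{2}, the fourth from \eqref{8}, the fifth from \eqref{5}, and the seventh from \eqref{4} and \eqref{1}, while the sixth and eighth steps are immediate evaluations such as $\lfloor\sqrt2\rfloor=1$, $\lfloor-\sqrt2\,\omega\rfloor=-a$ and $\lfloor\sqrt2\,\omega+\frac{1}{\sqrt2}\rfloor=a$. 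At each branching identity the chosen branch is dictated by the position of $\{\frac{a}{\sqrt2}\}$ relative to the thresholds $1-\frac{1}{\sqrt2}$, $\sqrt2-1$ and $\frac{1}{\sqrt2}$, and the hypothesis $\{\frac{a}{\sqrt2}\}\in[1-\frac{1}{\sqrt2},\frac{1}{\sqrt2}]$ is precisely what is designed to force every branch to be the one that makes the chain close up at $(a,0)$.

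For the existence of infinitely many admissible $a$ I would invoke the uniform-distribution input already available. Applying Theorem~\ref{th:Kronecker} to the single form $L(a)=\frac{a}{\sqrt2}$ (legitimate since $\frac{1}{\sqrt2}$ is irrational) shows that $(\{\frac{a}{\sqrt2}\})_{a\in\Z}$ is uniformly distributed modulo $1$, so the set of integers $a$ with $\{\frac{a}{\sqrt2}\}$ in the prescribed interval is not merely infinite but of positive density, and only finitely many of them need be discarded (those with $a\in\{0,1\}$, where \eqref{2} degenerates). For all such $a$ the auxiliary hypothesis $\lfloor\sqrt2\,\omega\rfloor=a-1$ is automatic, because $\sqrt2\,\omega=a-\sqrt2\{\frac{a}{\sqrt2}\}$ and $\{\frac{a}{\sqrt2}\}\in(0,\frac{1}{\sqrt2}]$ already forces $\sqrt2\,\omega\in[a-1,a)$.

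The main obstacle is the branch bookkeeping in the middle of the chain, not any single hard estimate. The delicate point is that the threshold governing \eqref{5}, namely $\sqrt2-1$, lies strictly inside the nominal interval $[1-\frac{1}{\sqrt2},\frac{1}{\sqrt2}]$, so one must verify very carefully which branch is actually forced: the chain above only closes when \eqref{5} is taken in its $-\omega+1$ branch, i.e.\ when $\{\frac{a}{\sqrt2}\}\le\sqrt2-1$, and I would double-check the complementary range $\{\frac{a}{\sqrt2}\}\in(\sqrt2-1,\frac{1}{\sqrt2}]$ separately, since there the branch of \eqref{5} flips (and a further threshold at $\frac12$ appears in the evaluation of $\lfloor\sqrt2\,\omega-\frac{1}{\sqrt2}\rfloor$), so the orbit may enter the eight-cycle only after a short preperiod rather than return to $(a,0)$. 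Getting right which inequalities are strict and which are weak at the endpoints, exactly as the lemma already records, is the step where the write-up must be most careful, and it may well restrict the effective hypothesis to the subinterval $[1-\frac{1}{\sqrt2},\sqrt2-1]$.
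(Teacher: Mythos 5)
Your approach is the same as the paper's: trace the eight iterates of $(a,0)$ through the identities of the preceding lemma and invoke equidistribution of $\{\frac{a}{\sqrt{2}}\}$ for the infinitude. Your bookkeeping is in fact more careful, and the worry you raise about identity \eqref{5} is not merely a point requiring care --- it is a genuine defect of the statement as printed. The hypothesis $\{\frac{a}{\sqrt{2}}\}\in[1-\frac{1}{\sqrt{2}},\frac{1}{\sqrt{2}}]$ does not force the branch $\{\frac{a}{\sqrt{2}}\}\le\sqrt{2}-1$ of \eqref{5}; when the other branch occurs the fifth iterate is $(-\omega,-\omega-1)$ instead of $(-\omega+1,-\omega-1)$ and the cycle fails to close. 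A concrete counterexample is $a=5$: there $\omega=3$, $\lfloor 3\sqrt{2}\rfloor=4=a-1$ and $\{\frac{5}{\sqrt{2}}\}\approx 0.536\in[0.293,0.707]$, yet the orbit of $(5,0)$ runs $(3,3)$, $(0,4)$, $(-3,2)$, $(-4,-1)$, $(-3,-4)$, $(0,-5)$, $(3,-4)$, $(4,-1)$, so $r_{\varphi}^{8}(5,0)=(4,-1)\neq(5,0)$. Your proposed repair --- restricting to $\{\frac{a}{\sqrt{2}}\}\in[1-\frac{1}{\sqrt{2}},\sqrt{2}-1]$ --- is exactly right: on that interval every branch in your chain is the one that closes the cycle (the eighth step needs $\{\frac{a}{\sqrt{2}}\}\le\frac{1}{2}$, which follows from $\le\sqrt{2}-1$), the side condition $\lfloor\sqrt{2}\omega\rfloor=a-1$ is automatic as you observe, and equidistribution still supplies infinitely many admissible $a$.

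Two further remarks. First, your seventh iterate $(\omega+1,-\omega)$ is the correct one; the paper's $(\omega+1,-\omega-1)$ is an error (test $a=2$: $r_{\varphi}^{7}(2,0)=(2,-1)$, not $(2,-2)$), though both intermediate values happen to lead to $(a,0)$ at the next step. Second, to make the write-up complete you must actually carry out the branch verification you describe --- in particular, the step using \eqref{8} requires $\{\sqrt{2}\omega\}\le\frac{1}{\sqrt{2}}$, which you should derive from $\{\sqrt{2}\omega\}=1-\sqrt{2}\{\frac{a}{\sqrt{2}}\}$ rather than from the paper's (incorrectly computed) claim that $\{\sqrt{2}\omega\}$ lies in $[1-\frac{1}{\sqrt{2}},\frac{1}{\sqrt{2}}]$; the true range under your restricted hypothesis is $\{\sqrt{2}\omega\}\in[2-\sqrt{2}-\frac{1}{\sqrt{2}},\,2-\sqrt{2}]$, which suffices.
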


\begin{proof}
First we prove that the assumptions imply $\{\sqrt{2}\omega\}\in
\left[1-\frac{1}{\sqrt{2}}, \frac{1}{\sqrt{2}}\right]$. Indeed, $[\sqrt{2}\omega] = a-1$ means $a-1 \le \sqrt{2}\omega <a$ (equality is only possible if $a=1$), hence $0< \frac{a}{\sqrt{2}} -\omega <\frac{1}{\sqrt{2}}$. Further  $\{\frac{1}{\sqrt{2}}a\}\in \left[1-\frac{1}{\sqrt{2}}, \frac{1}{\sqrt{2}}\right]$ is equivalent to the sequence of inequalities
\begin{eqnarray*}
1- \frac{1}{\sqrt{2}} &<& \frac{a}{\sqrt{2}} - \omega < \frac{1}{\sqrt{2}}\\
-\frac{1}{\sqrt{2}} &<&\sqrt{2}\omega - a < -1 + \frac{1}{\sqrt{2}}\\
1- \frac{1}{\sqrt{2}} &<& \sqrt{2}\omega -(a-1)< \frac{1}{\sqrt{2}},
\end{eqnarray*}
which proves the claim.

Now we prove the second assertion. As $\sqrt{2}$ is irrational, the sequence $\left\{\frac{a}{\sqrt{2}}\right\}$ is by Theorem \ref{th:Kronecker} uniformly distributed, thus there are infinitely many $a\in \Z$ satisfying $\{\frac{1}{\sqrt{2}}a\}\in \left[1-\frac{1}{\sqrt{2}}, \frac{1}{\sqrt{2}}\right]$.

Now we turn to prove the first assertion. It is obvious that $r_{\varphi}(a,0)=(\omega,\omega).$ Further by the assumption $\lfloor \sqrt{2}\omega\rfloor
= a-1$ we have $r_{\varphi}^2(a,0)=(0,a-1).$ Further we have
$r_{\varphi}^3(a,0)=r_{\varphi}(0,a-1)=(\lfloor-\frac{1}{\sqrt{2}}a+\frac{1}{\sqrt{2}}\rfloor, \lfloor
\frac{1}{\sqrt{2}}a-\frac{1}{\sqrt{2}}\rfloor).$ Thus by equations \eqref{1} and \eqref{2} we get
$r_{\varphi}^3=(-\omega,\omega-1).$ Further, by equation \ref{8} we have $r_{\varphi}^4(a,0)=(-a+1,-1).$ After that
we get $r_{\varphi}^5(a,0)=(-\omega+1,-\omega-1)$ by equation \eqref{5}. Then $r_{\varphi}^6(a,0)=(1,-a).$ Further we
get by equations \eqref{1} and \eqref{4} $r_{\varphi}^7(a,0)=(\omega+1,-\omega-1).$ Finally, using equation \eqref{7} we
have $r_{\varphi}^8(a,0)=r_{\varphi}(\omega+1,-\omega-1)=(\lfloor \sqrt{2}\omega + \sqrt{2}\rfloor,0)=(a,0).$
\end{proof}

\begin{rem}
	There exist infinitely many natural numbers $a,$ which fulfill the conditions in Theorem \ref{45fok}, therefore
there are infinitely many orbits with short periodicity.
\end{rem}

	We only give one class of starting points for the rotation by $45^{\circ}$ in order to achieve periodicity of
$8.$ There may exist many other starting points with the same periodicity.

\section{Remarks on $ROUND\circ A_{\varphi}$} \label{s:round}

Following the proofs of Theorems \ref{t:inj} and \ref{t:surj} one can prove similar statements for the function $ROUND\circ A_{\varphi}$. To perform such a project one has to adjust Lemma \ref{l:main} according the new rounding function. This is straight forward if one of $\cos \varphi$ and  $\sin \varphi$ is irrational, but needs some computation otherwise. To save space we leave this to the interested reader.


\begin{thebibliography}{99}
\bibitem{Akiyama-Borbely-Brunotte-Pethoe-Thuswaldner:04}
S.~Akiyama, T.~Borb\'{e}ly, H.~Brunotte, A.~Peth\H{o}, and J.~M. Thuswaldner,
  \emph{Generalized radix representations and dynamical systems {I}}, Acta
  Math. Hungar. \textbf{108} (2005), no.~3, 207--238.

\bibitem{Akiyama-Brunotte-Pethoe-Steiner:06}
S.~Akiyama, H.~Brunotte, A.~Peth\H{o}, and W.~Steiner, \emph{Remarks on a
  conjecture on certain integer sequences}, Periodica Math. Hungarica
  \textbf{52} (2006), 1--17.

\bibitem{Akiyama-Brunotte-Pethoe-Steiner:07}
\bysame, \emph{Periodicity of certain piecewise affine planar maps}, Tsukuba J.
  Math. \textbf{32} (2008), no.~1, 1--55.

\bibitem{Akiyama-Pethoe:13} S. ~Akiyama and A. ~Peth\H{o}, Discretized rotation has infinitely many periodic orbits, Nonlinearity {\bf 26} (2013), 871—880.



\bibitem{Cassels}
J.W.S. Cassels, An Introduction to Diophantine Approximation, Cambridge Tracts in Mathematics and Mathematical
Physics, No. 45. Cambridge University Press, New York, 1957.
Theorem I, p.64.

\bibitem{DeVogelaere}
R.~DeVogelaere, \emph{On the structure of symmetric periodic solutions of
  conservative systems, with applications}, Contributions to the theory of
  nonlinear oscillations, {V}ol. {IV}, Annals of Mathematics Studies, no 41,
  Princeton University Press, Princeton, N.J., 1958, pp.~53--84.

\bibitem{Diamond_etal}
P. Diamond, P. Kloeden, V. Kozyakin and A. Pokrovskii, Boundedness and dissipativity of truncated
rotations on uniform lattices, Math. Nachr. 191 (1998), 59 – 79


\bibitem{Kouptsov-Lowenstein-Vivaldi:02}
K.~Koupstov, J.~H. Lowenstein, and F.~Vivaldi, \emph{Quadratic rational
  rotations of the torus and dual lattice maps}, Nonlinearity \textbf{15}
  (2002), 1795--1842.

\bibitem{Kozyakin-Kuznetsov-Pokrovskii-Vladimirov}
V. Kozyakin, N. Kuznetsov, A. Pokrovskii and I. Vladimirov, Some problems in analysis of disretization of contonouos dynamical systems, Nonlinear Analysis, Theory, Methods \& Applications, {\bf 30}, (1997), 767-?78.
Proc. 2nd World Congress of Nonlinear Analysis.

\bibitem{Lowenstein_etal}
J.H. Lowenstein, S. Hatjispyros, and F. Vivaldi, Quasi-periodicity, global stability and
scaling in a model of Hamiltonian round-off, Chaos {\bf 7} (1997), 49 - 56.

\bibitem{Narkiewicz}
W. Narkiewicz, Uniform distribution of sequences of integers in residue classes, Lecture Notes in Mathematics, 1087.
Springer-Verlag, Berlin, 1984

\bibitem{ReeveBlack-Vivaldi} H.~ Reeve-Black, F.~ Vivaldi, Asymptotics in a family of linked strip maps, Physica D \textbf{290} (2015) 57–71

\bibitem{Vivaldi:94}
F.~Vivaldi, \emph{Periodicity and transport from round-off errors}, Experiment.
  Math. \textbf{3} (1994), 303--315.

\bibitem{Vivaldi:06}
\bysame, \emph{The arithmetic of discretized rotations}, {$p$}-adic
  mathematical physics, AIP Conf. Proc., vol. 826, Amer. Inst. Phys., Melville,
  NY, 2006, pp.~162--173.

\end{thebibliography}
\end{document}